      \def\dC{{\mathbb C}}
\def\dD{{\mathbb D}}
   \def\dN{{\mathbb N}}   
      \def\dR{{\mathbb R}}
   \def\dT{{\mathbb T}}   
   \def\dZ{{\mathbb Z}}
\def\bm\chi{\mbox{\boldmath$\chi$}}
\def\diag{{\rm diag\,}}
\let\xker=\ker \def\ker{{\xker\,}}
\def\sg{\operatorname{sign}}
\def\sg{\operatorname{sign}}
\def\deg{\operatorname{deg}}
\newtheorem{theorem}{Theorem}[section]
\newtheorem{proposition}[theorem]{Proposition}
\newtheorem{corollary}[theorem]{Corollary}
\newtheorem{lemma}[theorem]{Lemma}
\newtheorem{definition}[theorem]{Definition}
\theoremstyle{remark}
\newtheorem{remark}[theorem]{Remark}
\numberwithin{equation}{section}
\DeclareMathOperator{\Rl}{Re}
\newcommand{\nri}{n\rightarrow\infty}
\newcommand{\eitheta}{e^{i\theta}}
\date{\today}
\author[M. Derevyagin]{Maxim Derevyagin}
\address{
Maxim Derevyagin\\
University of Mississippi\\
Department of Mathematics\\
Hume Hall 305 \\ 
P. O. Box 1848 \\
University, MS 38677-1848, USA }
\email{derevyagin.m@gmail.com}
\author[B. Simanek]{Brian Simanek}
\address{Brian Simanek\\
Baylor University\\
Department of Mathematics\\
One Bear Place \#97328\\
Waco, TX 76798-7328, USA} 
\email{Brian\_Simanek@baylor.edu}
\subjclass{Primary 42C05; Secondary 30D30, 46C20.}
\keywords{Orthogonal polynomials on the unit circle, pseudo-Carath\'eodry function, Szeg\H{o} function, Szeg\H{o} mapping, asymptotic, non-symmetric Jacobi matrix}
\begin{document}

\title[Asymptotics for orthogonal polynomials]{Asymptotics for polynomials orthogonal in an indefinite metric}

\begin{abstract}
We continue studying polynomials generated by the Szeg\H{o} recursion when a finite number of Verblunsky coefficients lie outside the closed unit disk. We prove some asymptotic results for the corresponding orthogonal polynomials and then translate them to the real line to obtain the Szeg\H{o} asymptotics for the resulting polynomials. The latter polynomials give rise to a non-symmetric tridiagonal matrix but it is a finite-rank perturbation of a symmetric Jacobi matrix.  
\end{abstract}

\maketitle

\section{Introduction}

In the theory of orthogonal polynomials, two distinguished classes have historically received special attention, namely that in which the measure of orthogonality is supported on the unit circle (OPUC) and that in which the measure of orthogonality is supported on the real line (OPRL).  The prominent distinguishing feature of these classes is the existence of a recursion relation satisfied by the orthogonal polynomials.  In the setting of OPUC, this gives rise to the sequence of so-called \textit{Verblunsky coefficients}, which we denote by $\{\alpha_n\}_{n=0}^{\infty}$, each of which is a complex number in the open unit disk.  Verblunsky's Theorem establishes a bijection between such sequences and infinitely supported probability measures on the unit circle (see \cite[Chapter 1]{OPUC1}).  Similarly, Favard's Theorem establishes a bijection between pairs of bounded real sequences $\{a_n\}_{n\in\dN}$ and $\{b_n\}_{n\in\dN}$ where each $a_n>0$ and probability measures with infinite and compact support on the real line (see \cite[Theorem 1.3.7]{Rice}).  A common theme of the research in these fields has been to investigate the relationship between the measure of orthogonality and the corresponding sequence or sequences (see \cite{OPUC1,OPUC2,Rice}).  We will refer to any sequence or sequences to which we can apply Verblunsky's Theorem or Favard's Theorem as belonging to the \textit{classical case}.

In \cite{DS17}, a special non-classical class of Verblunsky coefficients was studied and unlike the classical case, that class does not correspond to meausres on the unit circle $\dT$. In this note we proceed with the exploration of this class and we will use the notation from \cite{DS17}, which is in turn inherited from \cite{OPUC1,OPUC2}. More precisely, we consider sequences $\{\alpha_n\}_{n=0}^{\infty}$ of complex numbers for which there exists a natural number $N$ such that
\begin{equation}\label{IndCond}
\begin{split}
|\alpha_n|&\ne 1, \quad n=0,1,2, \dots, N-1,\\
|\alpha_n|&<1, \quad n=N, N+1, N+2, \dots.
\end{split}
\end{equation}
Then, for any nonnegative integer $n$ it is still possible to define a monic polynomial $\Phi_{n+1}$ of degree $n+1$ by iterating the Szeg\H{o} recurrence
\begin{equation}\label{SzRec}
\begin{split}
\Phi_{n+1}(z)=z\Phi_n(z)-\overline{\alpha}_n\Phi_n^*(z)\\
\Phi_{n+1}^*(z)=\Phi_n^*(z)-\alpha_nz\Phi_n(z),
\end{split}
\end{equation}
provided that we set the initial condition to be
\begin{equation}\label{InCond}
\Phi_0(z)=1
\end{equation}
and $\Phi_n^*$ is the polynomial reversed to $\Phi_n$, that is,
\begin{equation}\label{RevPol}
\Phi_n^*(z)=z^n\overline{\Phi_n(1/\overline{z})}.
\end{equation}

Given a sequence $\{\alpha_n\}_{n=0}^{\infty}$, we will often refer to the $m$-times stripped sequence given by $\{\alpha_n\}_{n=m}^{\infty}$.  If a sequence satisfies \eqref{IndCond}, then the $N$-times stripped sequence satisfies the hypotheses of Verblunsky's theorem and thus corresponds to a measure on the unit circle.  Associated to such a measure is the sequence $\{f_n\}_{n=N}^{\infty}$ of Schur functions that satisfy the recursive relation
\begin{equation}\label{schurform}
f_n(z) = \frac{\alpha_n + zf_{n+1}(z)}{1+\bar\alpha_n zf_{n+1}(z)}, \quad n=N, N+1, N+2, \dots. 
\end{equation}
This recursion can be iterated with any choice of $\{\alpha_n\}_{n=0}^{N-1}$ and hence to any sequence $\{\alpha_n\}_{n=0}^{\infty}$ satisfying \eqref{IndCond} we can associate a sequence of functions $\{f_n\}_{n=0}^{\infty}$ that obeys \eqref{schurform}.  Since we allow some Verblunsky coefficients to be outside the closed unit disk, in this situation one cannot say that all $f_n$'s are Schur functions. However, \eqref{IndCond} ensures that $f_n$ is a Schur function for $n=N, N+1, \dots$.  Once we have the sequence of functions $\{f_n\}_{n=0}^{\infty}$, it is natural to define a function that will play the role of a Carth\'eodory function in our theory. To this end, let us set $f=f_0$ and use the standard formula to define $F$ as in \cite{DS17}
\begin{equation}\label{Fdef}  
F(z) := \frac{1+zf(z)}{1-zf(z)}.
\end{equation}
As is shown in \cite[Proposition 2.1]{DS17}, Khrushchev's formula still holds for this $F$, that is, 
\begin{equation}\label{Khrushev}
\Rl F(z)=\omega_{n-1}\frac{(1-|f_n(z)|^2)}{|\Phi_n^*(z)-z\Phi_n(z)f_n(z)|^2}, \quad n\in\dN,
\end{equation}
for Lebesgue almost every $z\in\dT$, where 
\[
\omega_{n}:=\prod_{j=0}^{n}(1-|\alpha_j|^2).
\]
It will be sometimes convenient to have a separate notation for the sign of $\omega_{n}$. So,  let $\epsilon_{n}$ be the sign of $\omega_{n}$, that is, $\epsilon_{n}=\sg\omega_{n}$.  With this notation we can say that $\epsilon_{N-1} F$ is a pseudo-Carath\'eodory function, which was proved in \cite[Theorem 2.3]{DS17}.  Such functions are related to classical Carath\'eodory functions in that pseudo-Carath\'eodory functions of finite index admit the representation
\[
F(z)=v(z)\overline{v(1/\bar{z})}g(z),
\]
where $g$ is a classical Carath\'eodory function with $|g(0)|=1$ and $v(z)$ is an outer rational function (see  \cite[Theorem 3.1]{DGK86} and \cite{ADL07}). 

In \cite{DS17} an analog of Szeg\H{o}'s Theorem was proven in this non-classical setting.  The aim of the present paper is to further develop the asymptotic theory of polynomials generated from the Szeg\H{o} recursion using a sequence satisfying \eqref{IndCond}.  We will pay special attention to adapting the relationship between OPUC and OPRL to this non-classical setting.  In the next section we define the generalized Szeg\H{o} function and then find the asymptotics for the normalized polynomials $\{\varphi_n^*\}_{n=0}^{\infty}$. In Section 3 we re-examine the Szeg\H{o} mapping and then use it to translate results to the real line. Finally, the last section provides some explicit examples.

For the remainder of this paper, we will always assume that $\{\alpha_n\}_{n=0}^{\infty}$ is a sequence satisfying \eqref{IndCond} and $\{\Phi_n\}_{n=0}^{\infty}$ is the corresponding sequence of monic polynomials generated from this sequence using the Szeg\H{o} recursion.  The corresponsing sequence $\{f_n\}_{n=0}^{\infty}$ and function $F$ will be defined as above.

\section{The Generalized Szeg\H{o} Function}

In this section we will extend the notion of the Szeg\H{o} function to the non-classical setting and use it to prove some asymptotic results for the polynomials $\{\Phi_n\}_{n=0}^{\infty}$.  We begin by adapting \cite[Theorem 8.48]{Kh08} to this setting, which is a result about orthonormal polynomials, so let us define
\[
\varphi_n(z):=\Phi_n(z)/\sqrt{|\omega_{n-1}|}.
\] 
\begin{theorem}\label{L1con}
If $\log\Rl F\in L^1(\dT)$ then
\begin{equation}
\lim_{n\to\infty}\int_{\dT}\left|\log\frac{\epsilon_{n-1}}{|\varphi_n^*(e^{i\theta})|^2}-\log\Rl F(e^{i\theta})\right| \, \frac{d\theta}{2\pi}=0.
\end{equation}
\end{theorem}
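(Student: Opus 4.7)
\emph{Reduction step.} The plan is to adapt Khrushchev's classical argument (\cite[Theorem 8.48]{Kh08}) to this non-classical setting. For $n\ge N$ the sign $\epsilon_{n-1}$ stabilizes to $\epsilon:=\epsilon_{N-1}$ and $f_n$ is a genuine Schur function. I would first use \eqref{Khrushev} together with the boundary identity $|\Phi_n^*(e^{i\theta})|=|\Phi_n(e^{i\theta})|$ to factor $|\varphi_n^*|^2$ out of the denominator, obtaining
\[
\epsilon\,\Rl F(e^{i\theta})\,|\varphi_n^*(e^{i\theta})|^2 \;=\; R_n(e^{i\theta}),\qquad R_n:=\frac{1-|f_n|^2}{|1-B_n f_n|^2},\quad B_n(z):=\frac{z\varphi_n(z)}{\varphi_n^*(z)},
\]
where $|B_n|=1$ on $\dT$. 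Since $\epsilon\,\Rl F\ge 0$ a.e., taking logarithms turns the theorem into the statement
\[
\int_\dT |\log R_n(e^{i\theta})|\,\frac{d\theta}{2\pi}\;\xrightarrow[n\to\infty]{}\;0.
\]

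\emph{Two integral limits.} The heart of the matter is to establish
\[
\int_\dT R_n\,\frac{d\theta}{2\pi}\to 1\quad\text{and}\quad \int_\dT \log R_n\,\frac{d\theta}{2\pi}\to 0.
\]
For the first limit, $R_n=\Rl G_n$ on $\dT$ where $G_n:=(1+B_nf_n)/(1-B_nf_n)$. Classically, $B_n$ would be a finite Blaschke product and $G_n$ a Carath\'eodory function with $G_n(0)=1$; the Szeg\H{o} hypothesis forces $|f_n|<1$ a.e.\ on $\dT$, the measure of $G_n$ is absolutely continuous, and the integral is exactly $1$. In our setting $B_n$ may have a bounded number of poles in $\dD$ at the (finitely many, stabilizing) zeros of $\varphi_n^*$ there, but a residue correction at these poles shows $\int R_n\,d\theta/(2\pi)=1+o(1)$. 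For the second limit, I would integrate the identity $\log(\epsilon\,\Rl F)=-\log|\varphi_n^*|^2+\log R_n$ over $\dT$; Jensen's formula applied to $\Phi_n^*$ yields
\[
\int_\dT \log|\varphi_n^*|^2\,\frac{d\theta}{2\pi}=-\log|\omega_{n-1}|+2\kappa_n,\qquad \kappa_n:=\!\!\!\sum_{\substack{\varphi_n^*(z)=0\\|z|<1}}\!\!\log\frac{1}{|z|},
\]
where $\kappa_n$ is uniformly bounded and tends to a limit $\kappa_\infty$. Combining this with a Szeg\H{o}-Kolmogorov identity of the form $\int_\dT\log|\Rl F|\,d\theta/(2\pi)=\log|\omega_\infty|-2\kappa_\infty$, obtained by applying classical Szeg\H{o} theory to the $N$-stripped (classical) sequence and using the factorization of the pseudo-Carath\'eodory function $F$ from \cite[Theorem 3.1]{DGK86}, delivers $\int\log R_n\,d\theta/(2\pi)\to 0$.

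\emph{Conclusion via convexity.} Set $u_n:=\log R_n$. The elementary pointwise inequality $e^x-x-1\ge c\min(x^2,|x|)$ (valid for all real $x$ with, say, $c=1/e$), together with the two limits, gives
\[
c\int_\dT\min(u_n^2,|u_n|)\,\frac{d\theta}{2\pi}\;\le\;\int_\dT(R_n-u_n-1)\,\frac{d\theta}{2\pi}=\int R_n\,\frac{d\theta}{2\pi}-\int u_n\,\frac{d\theta}{2\pi}-1\;\longrightarrow\;0.
\]
Splitting $\int|u_n|\,d\theta/(2\pi)$ according to $\{|u_n|\le 1\}$ (bounded via Cauchy-Schwarz against the probability measure by $(\int u_n^2\chi_{|u_n|\le 1}\,d\theta/(2\pi))^{1/2}$) and $\{|u_n|>1\}$ (directly controlled by $\int|u_n|\chi_{|u_n|>1}\,d\theta/(2\pi)$) then yields the desired $L^1$ convergence. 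The main obstacle is the $n$-uniform bookkeeping in the two integral limits: carefully tracking, via residues and Jensen's formula, the finitely many zeros of $\Phi_n^*$ that sit inside $\dD$ as a genuinely non-classical feature, and showing their contributions are controlled and stabilize as $n\to\infty$.
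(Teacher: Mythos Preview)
Your strategy---reduce to $\int_\dT|\log R_n|\to 0$, establish the two scalar limits $\int R_n\to 1$ and $\int\log R_n\to 0$, then close via the convexity inequality $e^x-x-1\ge c\min(x^2,|x|)$---is a genuinely different route from the paper's. The paper never computes $\int R_n$; instead it splits $\log R_n=\log(1-|f_n|^2)-2\log|1-B_nf_n|$ and drives each piece to $0$ in $L^1$: Boyd's theorem gives $\int|\log(1-|f_n|^2)|\to 0$ and (via $\log^+|1-w|\le|w|$ and Cauchy--Schwarz) $\int\log^+|1-B_nf_n|\to 0$, while \cite[Theorem~3.4]{DS17} supplies $\int\log|1-B_nf_n|\to 0$; subtraction finishes the job. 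Your $\int\log R_n\to 0$ is exactly the combination of these two facts and follows much more quickly that way than through your Jensen/Szeg\H{o}--Kolmogorov detour, although your route is not wrong.

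The real gap is in your argument for $\int R_n\to 1$. You locate the obstruction to $G_n=(1+B_nf_n)/(1-B_nf_n)$ being a Carath\'eodory function at the poles of $B_n$, i.e.\ at the zeros $\lambda_{j,n}$ of $\varphi_n^*$ in $\dD$, and propose a residue correction there. But $G_n$ is actually \emph{holomorphic} at those points: near a simple pole $\lambda$ of $B_n$ both $1\pm B_nf_n$ blow up like $\pm c f_n(\lambda)/(z-\lambda)$, so their ratio tends to $-1$. The genuine poles of $G_n$ lie at the zeros of $1-B_nf_n$ in $\dD$, which are different points. An argument-principle count (using $|B_nf_n|<1$ on $\dT$) shows there are exactly as many such zeros as $\Phi_n^*$ has in $\dD$; they sit near $\lambda_{j,n}$ at distance roughly $|c_{j,n}f_n(\lambda_{j,n})|$, and the corresponding residues of $G_n$ are of that same order. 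So the claim $\int R_n=1+o(1)$ can be rescued---ultimately because $f_n\to 0$ locally uniformly under the Szeg\H{o} hypothesis---but the analysis is at the wrong points and is more delicate than a correction ``at the zeros of $\varphi_n^*$''. Until that is carried out, the convexity step has no input to work with.
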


\begin{proof}
From \eqref{Khrushev} we see that
\[
\log(\epsilon_{n-1}|\varphi_n^*(e^{i\theta})|^2\Rl F(e^{i\theta}))=
\log(1-|f_n(e^{i\theta})|^2)-
2\log\left|1-e^{i\theta}\frac{\varphi_n(e^{i\theta})}{\varphi_n^*(e^{i\theta})}f_n(e^{i\theta})\right|,
\]
which implies that
\[
\begin{split}
&\left|\log\frac{\epsilon_{n-1}}{|\varphi_n^*(e^{i\theta})|^2}-\log\Rl F(e^{i\theta})\right|\le
|\log(1-|f_n(e^{i\theta})|^2)|\\
&\qquad\qquad\qquad+2\log^+\left|1-e^{i\theta}\frac{\varphi_n(e^{i\theta})}{\varphi_n^*(e^{i\theta})}f_n(e^{i\theta})\right|+
2\log^-\left|1-e^{i\theta}\frac{\varphi_n(e^{i\theta})}{\varphi_n^*(e^{i\theta})}f_n(e^{i\theta})\right|,
\end{split}
\]
where we use the standard notation $g^+(x):=\max(g(x),0)$ and $g^-(x):=g^+(x)-g(x)$.  Next, we notice that $f_n$ is a Schur function for all natural numbers $n\ge N$ and, therefore, by Boyd's theorem we get
\[
\lim_{n\to\infty}\int_{\dT}|\log(1-|f_n(e^{i\theta})|^2)|\, \frac{d\theta}{2\pi}=
\lim_{n\to\infty}\int_{\dT}\left(-\log(1-|f_n(e^{i\theta})|^2)\right)\, \frac{d\theta}{2\pi}=0.
\]
Since it follows from the reasoning given in the proof of \cite[Theorem 3.4]{DS17} that 
\[
\int_{\dT}\log\left|1-e^{i\theta}\frac{\varphi_n(e^{i\theta})}{\varphi_n^*(e^{i\theta})}f_n(e^{i\theta})\right|\, \frac{d\theta}{2\pi}
\]
converges to $0$ as $n$ tends to $\infty$, we need to only show that 
\[
\lim_{n\to\infty}\int_{\dT}\log^+\left|1-e^{i\theta}\frac{\varphi_n(e^{i\theta})}{\varphi_n^*(e^{i\theta})}f_n(e^{i\theta})\right|\, \frac{d\theta}{2\pi}=0.
\]
The latter is also a consequence of Boyd's theorem and the elementary facts
$\log^+|1-z|\le\log(1+|z|)$ whenever $0\le |z|\le 1$ and $\log(1+x)\le x \le \log\frac{1}{1-x}$ for $x\in[0,1]$. Indeed, we have the following chain of inequalities:
\[
\begin{split}
\int_{\dT}\log^+\left|1-e^{i\theta}\frac{\varphi_n(e^{i\theta})}{\varphi_n^*(e^{i\theta})}f_n(e^{i\theta})\right|\, \frac{d\theta}{2\pi}&\le \int_{\dT}|f_n(e^{i\theta})|\, \frac{d\theta}{2\pi}\le
\left(\int_{\dT}|f_n(e^{i\theta})|^2\, \frac{d\theta}{2\pi}\right)^{\frac{1}{2}}\\
&\le \left(\int_{\dT}\log\frac{1}{1-|f_n(e^{i\theta})|^2}\, \frac{d\theta}{2\pi}\right)^{\frac{1}{2}}.
\end{split}
\] 
Hence we arrive at the desired result.
\end{proof}

The conclusion of Theorem \ref{L1con} is a statement about strong convergence in $L^1$, which, as is known, implies weak convergence. In particular, the following statement holds true.

\begin{corollary}
If $\log\Rl F\in L^1(\dT)$ then
\begin{equation}\label{WeakConv}
\lim_{n\to\infty}\int_{\dT}e^{i k\theta}\log\frac{\epsilon_{n-1}}{|\varphi_n^*(e^{i\theta})|^2} \, \frac{d\theta}{2\pi}
=\int_{\dT}e^{i k\theta}\log\Rl F(e^{i\theta})\, \frac{d\theta}{2\pi},\quad k\in\dZ.
\end{equation}
\end{corollary}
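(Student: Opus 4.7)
The corollary is a direct routine consequence of Theorem~\ref{L1con}, which already establishes strong $L^1(\dT)$ convergence of the sequence $\log(\epsilon_{n-1}/|\varphi_n^*(e^{i\theta})|^2)$ to $\log\Rl F(e^{i\theta})$. The plan is to use the fact that the trigonometric monomials $e^{ik\theta}$ sit in $L^\infty(\dT)$ with unit sup-norm, so pairing with them defines a bounded linear functional on $L^1(\dT)$, and strong convergence automatically implies convergence under any such functional.

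Concretely, I would write the difference of the two integrals in \eqref{WeakConv} as
\[
\int_{\dT} e^{ik\theta}\left(\log\frac{\epsilon_{n-1}}{|\varphi_n^*(e^{i\theta})|^2}-\log\Rl F(e^{i\theta})\right)\frac{d\theta}{2\pi},
\]
and then estimate the absolute value of this quantity from above by
\[
\int_{\dT}\left|\log\frac{\epsilon_{n-1}}{|\varphi_n^*(e^{i\theta})|^2}-\log\Rl F(e^{i\theta})\right|\frac{d\theta}{2\pi},
\]
using $|e^{ik\theta}|=1$ and the triangle inequality for integrals. Theorem~\ref{L1con} then gives that this bound tends to $0$ as $n\to\infty$, uniformly in $k\in\dZ$, which yields the claim.

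There is essentially no main obstacle here; the content is entirely absorbed into Theorem~\ref{L1con}. The only thing to observe is that the integrability hypothesis $\log\Rl F\in L^1(\dT)$ is the same for the corollary as for the theorem, so it can be invoked directly without any additional preparation. Because of this, the argument can be presented as a single short paragraph immediately following Theorem~\ref{L1con}, as is already implicitly suggested by the remark preceding the corollary.
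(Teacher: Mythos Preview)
Your proposal is correct and matches the paper's approach: the paper simply notes that strong $L^1$ convergence implies weak convergence and states the corollary without further proof. Your explicit estimate via $|e^{ik\theta}|=1$ just spells out this routine step.
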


In trying to adapt the classical theory to our setting we introduce an analog of the Szeg\H{o} function as well.

\begin{definition}
Suppose that $\log\Rl F\in L^1(\dT)$.  The Szeg\H{o} function, $D(z)$, is defined by
\[
D(z)=\exp\left(\frac{1}{4\pi}\int_{\dT}\frac{e^{i\theta}+z}{e^{i\theta}-z}\log\left(\epsilon_{N-1}\Rl F(e^{i\theta})\right)\, {d\theta}\right).
\]
\end{definition}

Let us begin our analysis by deriving an alternate formula for $D(z)$.  The calculations and formulas in \cite{DS17} show us that \cite[Lemma 2.9.2]{OPUC1} (and its proof) remains valid in this non-classical setting.  To be precise, let $F_j$ be the Carath\'eodory function for the $j$-times stripped sequence $\{\alpha_n\}_{n=j}^{\infty}$ and let $f=f_0$.  Then
\[
\frac{\Rl F(z)}{\Rl F_1(z)}=\frac{|1-\bar{\alpha}_0f|^2|1-zf_1|^2(1-|zf|^2)}{(1-|\alpha_0|^2)|1-zf|^2(1-|f|^2)}.
\]
Therefore, we define the relative Szeg\H{o} function by
\[
(\delta_0D)(z)=\frac{(1-\bar{\alpha}_0f(z))(1-zf_1(z))}{|\rho_0|(1-zf(z))},
\]
where $\rho_n=\sqrt{1-|\alpha_n|^2}$.  Notice that $|\rho_0|$ is well-defined even if $|\alpha_0|>1$.  Then the above formula shows that for all $z\in\dT$ it holds that
\[
\Rl F(z)=\frac{|\rho_0|^2}{\rho_0^2}\Rl F_1(z)|(\delta_0D)(z)|^2,
\]
which is an analog of \cite[Equation 2.9.17]{OPUC1}.  Similarly, if we define
\[
(\delta_jD)(z)=\frac{(1-\bar{\alpha}_jf_j(z))(1-zf_{j+1}(z))}{|\rho_j|(1-zf_j(z))},
\]
then we iterate the above reasoning and arrive at
\begin{align*}
\Rl F(z)&=\frac{|\rho_0\rho_1\cdots\rho_{N-1}|^2}{(\rho_0\rho_1\cdots\rho_{N-1})^2}\Rl F_N(z)\left|\prod_{j=0}^{N-1}(\delta_jD)(z)\right|^2=\epsilon_{N-1}\Rl F_N(z)\left|\prod_{j=0}^{N-1}(\delta_jD)(z)\right|^2\\
&=\epsilon_{N-1}\Rl F_N(z)\left|\frac{1-zf_N(z)}{1-zf(z)}\prod_{j=0}^{N-1}\frac{1-\bar{\alpha}_jf_j(z)}{|\rho_j|}\right|^2
\end{align*}
We need the following lemma.

\begin{lemma}\label{l1f}
For all $j\geq0$, we have $\log|1-\bar{\alpha}_jf_j|\in L^1(\dT)$.
\end{lemma}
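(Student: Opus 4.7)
The idea is to treat the two regimes $j \geq N$ and $j < N$ separately, using that Schur functions are bounded by $1$ in the first case and that the recursion \eqref{schurform} preserves the class of meromorphic functions of bounded type on $\dD$ in the second.

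For $j \geq N$, the function $f_j$ is a Schur function, so $|f_j(e^{i\theta})| \leq 1$ a.e. Combined with the hypothesis $|\alpha_j| < 1$ from \eqref{IndCond}, this gives the two-sided estimate
\[
1 - |\alpha_j| \leq |1 - \bar{\alpha}_j f_j(e^{i\theta})| \leq 1 + |\alpha_j|,
\]
so $\log|1 - \bar{\alpha}_j f_j|$ is bounded and hence in $L^1(\dT)$.

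For $j < N$, I would first show that each $f_j$ is of bounded type in the open unit disk. Starting from $f_N \in H^\infty(\dD)$ (since $f_N$ is a Schur function), the recursion \eqref{schurform} expresses $f_j$ as a M\"obius transformation in $f_{j+1}$ with $H^\infty(\dD)$ coefficients. A downward induction on $j$ from $N$ to $0$ then writes $f_j$ as a quotient of two $H^\infty(\dD)$ functions, placing $f_j$ in the Nevanlinna class of $\dD$. Consequently $1 - \bar{\alpha}_j f_j$ also belongs to the Nevanlinna class.

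To invoke the standard fact that a nonzero meromorphic function of bounded type on $\dD$ has $\log$-integrable modulus on $\dT$, it remains only to verify that $1 - \bar{\alpha}_j f_j \not\equiv 0$. Evaluating \eqref{schurform} at $z=0$ gives $f_j(0) = \alpha_j$, whence
\[
(1 - \bar{\alpha}_j f_j)(0) = 1 - |\alpha_j|^2 \neq 0
\]
by the assumption $|\alpha_j| \neq 1$ in \eqref{IndCond}. The conclusion then follows. The only nontrivial point is the inductive step showing that $f_j$ is of bounded type for $j < N$, but this is routine once one notes that quotients of $H^\infty(\dD)$ functions are closed under the rational operations appearing in the Schur recursion.
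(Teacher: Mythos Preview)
Your proof is correct and follows essentially the same route as the paper: write $1-\bar\alpha_j f_j$ as a quotient of two nonzero $H^\infty(\dD)$ functions, check nonvanishing at $z=0$ using $|\alpha_j|\neq 1$, and invoke the standard $\log$-integrability result for functions of bounded type. The only cosmetic difference is that the paper quotes an explicit closed-form representation of $f_j$ in terms of $f_N$ and certain polynomials from \cite{DS17} in place of the downward induction on the Schur recursion that you sketch.
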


\begin{proof}
By \cite[Equation 2.2]{DS17}, we know that there are polynomials $R_j(z)$ and $Q_j(z)$ such that
\[
f_j(z)=\frac{R_j(z)+zQ_j^*(z)f_N(z)}{Q_j(z)+zR_j^*(z)f_N(z)}.
\]
Furthermore, we know that $Q_j(0)=1$ and $R_j(0)=\alpha_j$.  We then find
\begin{align*}
1-\bar{\alpha}_jf_j(z)=\frac{Q_j(z)+zR_j^*(z)f_N(z)-\bar{\alpha}_jR_j(z)-\bar{\alpha}_jzQ_j^*(z)f_N(z)}{Q_j(z)+zR_j^*(z)f_N(z)}.
\end{align*}
Both the numerator and denominator of this fraction are clearly in $H^{\infty}(\dD)$.  Furthermore, neither the numerator not the denominator are identically zero as can be seen by evaluation at zero and using the fact that $|\alpha_j|\neq1$.  The desired result now follows from \cite[Theorem 17.17]{Rudin}.
\end{proof}

Recall that $F_N$ is a classical Carath\'eodory function.  The definition of $D(z)$ and Lemma \ref{l1f} lead us to the following formula
\begin{align*}
D(z)&=\frac{D_N(z)}{|\rho_0\rho_1\cdots\rho_{N-1}|}\exp\left(\frac{1}{2\pi}\int_{\dT}\frac{e^{i\theta}+z}{e^{i\theta}-z}\log\left|\frac{1-e^{i\theta}f_N(e^{i\theta})}{1-e^{i\theta}f(e^{i\theta})}\right|d\theta\right)\\
&\qquad\qquad\qquad\times\prod_{j=0}^{N-1}\exp\left(\frac{1}{2\pi}\int_{\dT}\frac{e^{i\theta}+z}{e^{i\theta}-z}\log|1-\bar{\alpha}_jf_j(e^{i\theta})|d\theta\right),
\end{align*}
where $D_N$ is the Szeg\H{o} function for the $N$-times stripped sequence $\{\alpha_n\}_{n=N}^{\infty}$.  We can then apply \cite[Equation 2.9.12]{OPUC1} to $D_N$ to derive the following formula for $D(z)$ in terms of the Schur iterates of $f$:

\begin{theorem}
If $|z|<1$ and $\log\Rl F\in L^1(\dT)$, then
\begin{align*}
D(z)&=\frac{\prod_{j=N}^{\infty}(1-\bar{\alpha}_jf_j(z))}{\prod_{j=0}^{\infty}|\rho_j|}\exp\left(\frac{1}{2\pi}\int_{\dT}\frac{e^{i\theta}+z}{e^{i\theta}-z}\log\left|\frac{\prod_{j=0}^{N-1}(1-\bar{\alpha}_jf_j(e^{i\theta}))}{1-e^{i\theta}f(e^{i\theta})}\right|d\theta\right)\\
\end{align*}
\end{theorem}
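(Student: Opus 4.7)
The plan is to substitute into the formula for $D(z)$ displayed immediately before the statement the product representation of $D_N(z)$ obtained by applying \cite[Equation 2.9.12]{OPUC1} to the classical stripped sequence $\{\alpha_n\}_{n\ge N}$. That identity, applied to the Schur function $f_N$, expresses $D_N$ as the tail Schur-iterate product $\prod_{j=N}^{\infty}(1-\bar\alpha_j f_j(z))/\rho_j$ divided by $1-zf_N(z)$, so after the substitution every surviving ingredient is already written in terms of Schur iterates of $f$.

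The decisive observation is that $1-zf_N$ is an outer function in $H^\infty(\dD)$. Since $f_N$ is a Schur function, $|zf_N(z)|<1$ on $\dD$, so $1-zf_N$ is invertible in $H^\infty(\dD)$ and hence outer; combined with $(1-zf_N)(0)=1>0$ it admits the Poisson representation
\[
1-zf_N(z)=\exp\left(\frac{1}{2\pi}\int_{\dT}\frac{e^{i\theta}+z}{e^{i\theta}-z}\log|1-e^{i\theta}f_N(e^{i\theta})|\,d\theta\right),\qquad z\in\dD.
\]
Consequently the factor $(1-zf_N(z))^{-1}$ contributed by $D_N$ is equal to the exponential of $-\frac{1}{2\pi}\int_{\dT}\frac{e^{i\theta}+z}{e^{i\theta}-z}\log|1-e^{i\theta}f_N(e^{i\theta})|\,d\theta$, which exactly cancels the $\log|1-e^{i\theta}f_N|$ term already sitting inside the exponential of the starting formula.

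After this cancellation only algebraic bookkeeping remains: the denominator $|\rho_0\rho_1\cdots\rho_{N-1}|$ combines with the positive tail $\prod_{j\ge N}\rho_j$ to form $\prod_{j=0}^{\infty}|\rho_j|$, and the remaining integrand collapses to $\log|\prod_{j=0}^{N-1}(1-\bar\alpha_j f_j(e^{i\theta}))/(1-e^{i\theta}f(e^{i\theta}))|$, which is precisely the integrand appearing in the statement. Integrability of each $\log|1-\bar\alpha_j f_j|$ for $j<N$, required for this exponential to be well-defined, was supplied by Lemma \ref{l1f}.

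The most delicate point in this argument is the outerness of $1-zf_N$, which is precisely what lets the $f_N$-contribution be absorbed and removed. No analogous absorption is available for $1-zf$, because $f$ itself need not be a Schur function when some of the early $|\alpha_j|$ exceed $1$; this is why the factor $1-e^{i\theta}f$ survives inside the integral in the final expression.
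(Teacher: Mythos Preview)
Your approach is essentially the paper's: start from the displayed formula for $D(z)$ in terms of $D_N$, apply \cite[Equation 2.9.12]{OPUC1} to $D_N$, and reduce everything to the Poisson representation of $1-zf_N(z)$, which hinges on that function being outer with positive value at $0$. The bookkeeping with the $|\rho_j|$ factors and the collapse of the integrand are exactly as in the paper.

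The one genuine weakness is your justification of outerness. You argue that since $|zf_N(z)|<1$ on $\dD$, the function $1-zf_N$ is ``invertible in $H^\infty(\dD)$ and hence outer.'' But $|zf_N(z)|<1$ on the open disk only shows that $1-zf_N$ is \emph{nonvanishing} there; it does not show that $1/(1-zf_N)$ is bounded, so invertibility in $H^\infty$ is not established. For example, if $f_N$ has radial limit $1$ at some $e^{i\theta_0}\in\dT$, then $1-zf_N(z)\to 0$ along that radius and the reciprocal is unbounded. And nonvanishing by itself does not force outerness: any nontrivial singular inner function is zero-free on $\dD$ yet has no outer part. The paper avoids this by citing \cite[Lemma 2.7.8]{OPUC1}, which proves directly that $1-zf$ is outer whenever $f$ is a Schur function; you should invoke that lemma (or give a correct proof of outerness) in place of the invertibility claim. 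With that fix, your argument and the paper's coincide.
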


\begin{proof}
Starting with our above formula and an application of \cite[Equation 2.9.12]{OPUC1}, all that remains to prove is that
\[
1-zf_N(z)=\exp\left(\frac{1}{2\pi}\int_0^{2\pi}\frac{\eitheta+z}{\eitheta-z}\log|1-\eitheta f_N(\eitheta)|d\theta\right),\qquad z\in\dD.
\]
This equality follows from \cite[Theorem 17.17]{Rudin} and the fact that $1-zf_N(z)$ is an outer function, which was proven in \cite[Lemma 2.7.8]{OPUC1}.
\end{proof}

To formulate the next result, which is a generalization of \cite[Theorem 2.4.1, part (iv)]{OPUC1}, we should recall from \cite{DS17} that the zeros of $\varphi_n^*$ inside $\dD$ will either tend to the poles of $F$ in $\dD$ or to the boundary of the unit disk as $\nri$. So, let $B_n$ be the Blaschke product formed by the zeroes of $\varphi_n^*$ inside $\dD$, that is
\[
B_{n}(z)=\prod_{j=1}^{k}\frac{|\lambda_{j,n}|}{\lambda_{j,n}}\frac{\lambda_{j,n}-z}{1-\bar{\lambda}_{j,n}z},
\]
where $\lambda_{j,n}\in\dD$ and $k$ is independent of $n$ for sufficiently large $n$.
Then $B_n$ converges to a function $B$ locally uniformly in $\dD$, where $B$ is the Blaschke product constructed from the limit points of the zeroes of $\varphi_n^*$ inside $\dD$.  With this notation, we can now state the main result of this section.

\begin{theorem}\label{normalszego}
If $\log\Rl F\in L^1(\dT)$ then
\begin{equation}\label{PhiStarConv}
\lim_{n\to\infty}\varphi_n^*(z)=B(z)D^{-1}(z)
\end{equation}
uniformly on compact subsets of $\dD$.
\end{theorem}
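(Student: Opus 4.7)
My plan is to exploit the fact that each $\varphi_n^*$ is a polynomial, so its inner--outer factorization in $H^\infty(\dD)$ is simply $\varphi_n^*=B_n\cdot h_n$, where $h_n:=\varphi_n^*/B_n$ is a non-vanishing analytic function in $\dD$ whose only possible inner factor is a Blaschke product coming from zeros in $\dD$ -- but by construction $B_n$ already accounts for those. Hence $h_n$ is outer in $\dD$, and I can recover it via the explicit Herglotz integral representation in terms of its boundary modulus $|\varphi_n^*|$. The strategy is then to pass to the limit in this representation using the $L^1(\dT)$ convergence from Theorem~\ref{L1con}.

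First I would write, for some unimodular constant $c_n$,
\[
h_n(z)=c_n\exp\!\left(\frac{1}{2\pi}\int_{\dT}\frac{e^{i\theta}+z}{e^{i\theta}-z}\log|\varphi_n^*(e^{i\theta})|\,d\theta\right),
\]
using that $\log|\varphi_n^*|\in L^1(\dT)$ (since $\varphi_n^*$ is a polynomial). To pin down $c_n$, I would evaluate at $z=0$: the left side equals $\varphi_n^*(0)/B_n(0)=1/(\sqrt{|\omega_{n-1}|}\prod_j|\lambda_{j,n}|)>0$, while the exponential factor is a positive real number; therefore $c_n=1$.

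Next, from Theorem~\ref{L1con} and the fact that $\epsilon_{n-1}=\epsilon_{N-1}$ for all $n\ge N$, I get
\[
\log|\varphi_n^*|\longrightarrow-\tfrac12\log\bigl(\epsilon_{N-1}\Rl F\bigr)\qquad\text{in }L^1(\dT).
\]
Since for $z$ varying in a fixed compact subset $K\subset\dD$ the Herglotz kernel $\frac{e^{i\theta}+z}{e^{i\theta}-z}$ is uniformly bounded in $(\theta,z)\in\dT\times K$, this $L^1$-convergence implies that the Herglotz integral in the representation of $h_n$ converges uniformly on $K$ to
\[
-\frac{1}{4\pi}\int_{\dT}\frac{e^{i\theta}+z}{e^{i\theta}-z}\log\bigl(\epsilon_{N-1}\Rl F(e^{i\theta})\bigr)\,d\theta,
\]
which by definition equals $\log D(z)^{-1}$. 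Exponentiating, $h_n(z)\to D(z)^{-1}$ uniformly on $K$. Combining with the locally uniform convergence $B_n\to B$ recalled just before the statement, I conclude that $\varphi_n^*=B_n h_n\to B\cdot D^{-1}$ uniformly on compact subsets of $\dD$.

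The main obstacle is cleanly justifying the outer factorization $\varphi_n^*=B_n h_n$ with $h_n$ outer, together with the determination of the unimodular constant via evaluation at $0$: one needs to check that the polynomial $\varphi_n^*$ has no inner factor other than the Blaschke product $B_n$, and that the factor $\varphi_n^*(0)$ is a genuine positive real (both facts follow from $\Phi_n^*(0)=1$ and the definitions, but they need to be invoked carefully). Once this outer representation is in hand, the rest of the argument is a direct passage to the limit driven by the $L^1$ theorem.
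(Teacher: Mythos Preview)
Your proof is correct and follows essentially the same route as the paper: both factor $\varphi_n^*=B_n h_n$ with $h_n$ outer, write $h_n$ via the Herglotz representation of $\log|\varphi_n^*|$, and then pass to the limit using Theorem~\ref{L1con} together with $B_n\to B$. Your argument is in fact slightly more explicit---you determine the unimodular constant by evaluating at $0$ and pass to the limit directly from the $L^1$ convergence and the uniform boundedness of the Herglotz kernel on compacts---whereas the paper invokes the weak-convergence consequence \eqref{WeakConv} and convergence of all Taylor coefficients at $0$.
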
 
\begin{proof}
Let us observe that $\varphi_n^*(z)/B_n(z)$ is an outer function by \cite[Corollary 4.7, Chapter II]{G81}. Hence, 
$\varphi_n^*(z)$ does not have an inner part. So, by the theorem from \cite[Chapter IV, Section D.4]{K98} we have
\[
\begin{split}
\varphi_n^*(z)&=B_n(z)\exp\left(\frac{1}{2\pi}\int_{\dT}\frac{e^{i\theta}+z}{e^{i\theta}-z}\log|\varphi_n^*(e^{i\theta})|\,{d\theta}\right)\\
&=B_n(z)\exp\left(-\frac{1}{4\pi}\int_{\dT}\frac{e^{i\theta}+z}{e^{i\theta}-z}\log|\varphi_n^*(e^{i\theta})|^{-2}\,{d\theta}\right)
\end{split}
\] 
It remains to notice that $\epsilon_n=\epsilon_{N-1}$ for all $n\ge N-1$ and to use \eqref{WeakConv} together with the criterion of convergence of analytic functions in terms of derivatives of all orders at $0$. 
\end{proof}

Evaluation of \eqref{PhiStarConv} at $0$ gives Szeg\H{o}'s theorem from \cite{DS17}.

\begin{corollary}
If $\log\Rl F\in L^1(\dT)$, then it holds that
\begin{equation}\label{StarRatio}
\lim_{n\to\infty}\frac{\Phi_{n+1}^*(z)}{\Phi_{n}^*(z)}=1
\end{equation}
locally uniformly on $\overline{\dD}\setminus\{\lambda_j\}_{j=1}^{k}$, where $\{\lambda_j\}_{j=1}^{k}$ is the set of zeroes of $B$.
\end{corollary}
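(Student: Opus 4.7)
The plan is to derive the corollary from the second line of the Szeg\H{o} recurrence \eqref{SzRec}, which rearranges to
\[
\frac{\Phi_{n+1}^*(z)}{\Phi_n^*(z)} - 1 = -\alpha_n z\,\frac{\Phi_n(z)}{\Phi_n^*(z)}.
\]
Thus it suffices to show that $\alpha_n \to 0$ together with a uniform bound on $\Phi_n/\Phi_n^*$ over every compact $K \subset \overline{\dD}\setminus\{\lambda_j\}_{j=1}^{k}$. The first fact is a consequence of the generalized Szeg\H{o} theorem of \cite{DS17}: the hypothesis $\log\Rl F\in L^1(\dT)$ implies $\sum_{n}|\alpha_n|^2<\infty$ and in particular $\alpha_n\to 0$.

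For the uniform bound on $\Phi_n/\Phi_n^*$ I would use the factorization
\[
\frac{\Phi_n(z)}{\Phi_n^*(z)}=\prod_{j=1}^{n}\frac{z-z_j^{(n)}}{1-\overline{z_j^{(n)}}\,z},
\]
where $\{z_j^{(n)}\}_{j=1}^n$ are the zeros of $\Phi_n$. A direct calculation shows each factor has modulus one on $\dT$. Splitting the product according to whether $|z_j^{(n)}|\le 1$ or $|z_j^{(n)}|>1$, the first group combines into an ordinary finite Blaschke product bounded by $1$ on $\overline{\dD}$. The second group has, for $n$ large, a fixed cardinality $k$, and the poles $1/\overline{z_j^{(n)}}$ of these ``reversed'' factors inside $\dD$ are precisely the zeros of $\Phi_n^*$ in $\dD$; by the discussion preceding Theorem \ref{normalszego} these poles converge to the set $\{\lambda_j\}$. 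Hence the product of the reversed factors converges locally uniformly on $\overline{\dD}\setminus\{\lambda_j\}$ to a rational function that is analytic on $K$, and is in particular uniformly bounded on $K$ by some $M_K$ for all $n$ sufficiently large. Combining the two halves of the factorization gives $|\Phi_n(z)/\Phi_n^*(z)|\le M_K$ on $K$, whence
\[
\sup_{z\in K}\left|\frac{\Phi_{n+1}^*(z)}{\Phi_n^*(z)}-1\right|\le |\alpha_n|\,M_K\longrightarrow 0.
\]

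The main obstacle is carrying the bound on $\Phi_n/\Phi_n^*$ up to the closed disk: Theorem \ref{normalszego} gives uniform control of $\varphi_n^*$ only on compacts of $\dD$, so convergence of the ratio $\varphi_{n+1}^*/\varphi_n^*$ to $1$ off $\{\lambda_j\}$ follows only inside $\dD$. The extension to compacts that touch $\dT$ is what forces the detour through the Szeg\H{o} recurrence and the explicit factorization above, and relies on the structural fact from \cite{DS17} that the $k$ singularities of $\Phi_n/\Phi_n^*$ inside $\dD$ cluster on the discrete set $\{\lambda_j\}$, so that for large $n$ they remain in a fixed neighbourhood of $\{\lambda_j\}$ disjoint from $K$.
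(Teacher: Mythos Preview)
Your argument is correct. Both your approach and the paper's rest on the same two ingredients, namely $\alpha_n\to 0$ (from the Szeg\H{o} theorem of \cite{DS17}) and control of $\Phi_n/\Phi_n^*$ on $\overline{\dD}\setminus\{\lambda_j\}$, but they are organized differently. The paper's proof is two lines: on compacta inside $\dD\setminus\{\lambda_j\}$ it simply quotes Theorem~\ref{normalszego}, since $\varphi_n^*\to BD^{-1}$ locally uniformly and $BD^{-1}$ is nonvanishing there, giving $\varphi_{n+1}^*/\varphi_n^*\to 1$ directly; on $\dT$ it defers to \cite[formula (1.7.12)]{OPUC1}, which amounts to the recursion identity you wrote together with $|\Phi_n/\Phi_n^*|\equiv 1$ on $\dT$. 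Your route instead handles the interior and the boundary in a single stroke via the explicit Blaschke-type factorization of $\Phi_n/\Phi_n^*$, bounding the ``inside'' factors by $1$ on all of $\overline{\dD}$ and the finitely many ``outside'' factors uniformly on $K$ once their poles (the zeros of $\Phi_n^*$ in $\dD$) settle near $\{\lambda_j\}$. The payoff of your version is that it is self-contained and explicitly addresses compacta that meet $\dT$, a point the paper's two-case split leaves implicit. One small remark: your claim that the $k$ poles converge to $\{\lambda_j\}$ and stay away from $K$ uses that, for large $n$, the number of zeros of $\Phi_n^*$ in $\dD$ is exactly $k$ and $B_n\to B$ with $B$ having precisely the zeros $\{\lambda_j\}_{j=1}^k\subset\dD$; this is how the paper sets things up before Theorem~\ref{normalszego}, so the appeal is legitimate, but it is worth making the dependence explicit since a zero drifting to $\dT$ would spoil the uniform bound near that boundary point.
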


\begin{proof}
For $z\in\dD\setminus\{\lambda_j\}_{j=1}^{k}$, this immediately follows form \eqref{PhiStarConv}. As for the boundary of the unit disk, see \cite[formula (1.7.12)]{OPUC1}.
\end{proof}

\section{The Szeg\H{o} mapping and Geronimus relations}

In this section we will explore some connections between our results and the theory of orthogonal polynomials on the real line.  Our main goal is to extend some of the results from \cite[Section 13.1]{OPUC2} to our setting and then apply them to obtain an analog of \cite[Theorem 13.3.2]{OPUC2}.

Mimicking the classical case of the Szeg\H{o} mapping, we start with a sequence of real Verblunsky coefficients subject to \eqref{IndCond}. Then, we get a sequence of polynomials $\{\Phi_n\}_{n=0}^{\infty}$ from the Szeg\H{o} recurrence  \eqref{SzRec} and, since each $\alpha_n\in\dR$, each $\Phi_n$ has real coefficients.  In the classical setting of orthogonal polynomials on the unit circle, this situation allows one to obtain a corresponding measure on the interval $[-2,2]$ by means of the Szeg\H{o} mapping (see \cite[Chapter 13]{OPUC2}).  One can then explore asymptotics of the corresponding orthogonal polynomials in terms of objects from the unit circle setting.  Although we are not dealing with measures on the unit circle, there is still a portion of the theory that can be carried over to our setting.  We begin with the following observation.

\begin{proposition}\label{subex}
Let $\{\alpha_n\}_{n=0}^{\infty}$ be a sequence of Verblunsky coefficients satisfying \eqref{IndCond} and let $\{\Phi_n\}_{n=0}^{\infty}$ be the corresponding sequence of monic orthogonal polynomials.  If
\[
\Phi_n(z)=z^n+\sum_{j=0}^{n-1}t_jz^j,
\]
then define a linear functional $\mu$ on the space of Laurent polynomials by $\mu(1)=1$ and
\begin{align}
\nonumber\mu(z^n)&=-\sum_{j=0}^{n-1}t_j\mu(z^j),\qquad n\in\dN,\\
\label{symmetry}\mu(z^{-n})&=\overline{\mu(z^n)},\qquad n\in\dN.
\end{align}
Then
\[
\limsup_{n\to\infty}|\mu(z^n)|^{1/n}<\infty.
\]
\end{proposition}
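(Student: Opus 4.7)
The plan is to identify $\mu(z^n)$ with the complex conjugate of the $n$-th Taylor coefficient of $F$ at the origin, and then to read off the desired exponential bound from the fact that $F$ is analytic near $z=0$.

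First I would verify that $F$ is analytic in a neighborhood of the origin. Since $f_N$ is a Schur function and the recursion \eqref{schurform} expresses each $f_{j-1}$ as a M\"obius transformation of $f_j$ whose denominator $1+\overline{\alpha}_{j-1}zf_j(z)$ equals $1$ at $z=0$, iterating $N$ times produces an $f=f_0$ that is analytic in some disk $\{|z|<\rho\}$ with $\rho>0$ (note $f(0)=\alpha_0$). The same observation applied to $1-zf(z)$ shows that $F(z)=(1+zf(z))/(1-zf(z))$ is analytic on a disk around $0$, so we may write
\[
F(z)=1+2\sum_{n=1}^{\infty}F_n z^n, \qquad |z|<\rho,
\]
and conclude $\limsup_{n\to\infty}|F_n|^{1/n}\le 1/\rho<\infty$.

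The crux of the proof is then the moment identity
\[
\overline{\mu(z^n)}=F_n, \qquad n\ge 1,
\]
which I would establish by a polynomial identity argument. Unwinding the Schur recursion \eqref{schurform} shows that $F_n$ is a polynomial in $\alpha_0,\overline{\alpha_0},\ldots,\alpha_{n-1},\overline{\alpha_{n-1}}$; unwinding the Szeg\H{o} recursion \eqref{SzRec} shows that the coefficients $t_0,\ldots,t_{n-1}$ of $\Phi_n$ are polynomials in the same variables, and then the defining recurrence for $\mu(z^n)$ makes $\overline{\mu(z^n)}$ one as well. In the classical case $|\alpha_k|<1$ for all $k$, Verblunsky's theorem provides a probability measure on $\dT$ whose moments are given by $\mu(z^n)$ and whose Carath\'eodory function is $F$, so the identity is standard. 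Two polynomials in the real and imaginary parts of $\alpha_0,\ldots,\alpha_{n-1}$ that agree on the nonempty open set $\prod_{k=0}^{n-1}\{|\alpha_k|<1\}$ must agree everywhere, so the identity persists for every sequence satisfying \eqref{IndCond}. Combining this with Step~1 gives $\limsup_{n\to\infty}|\mu(z^n)|^{1/n}=\limsup_{n\to\infty}|F_n|^{1/n}\le 1/\rho<\infty$, as required.

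The principal technical point is the polynomial identity step: one must verify that the Schur iterates, the Szeg\H{o} polynomials, and the functional $\mu$ applied to monomials all depend polynomially on the Verblunsky data. This is immediate once one notes that the denominators appearing in both \eqref{SzRec} and \eqref{schurform} equal $1$ at $z=0$, so extracting Taylor coefficients requires no division. With that in hand, the polynomial identity principle transports the classical identification of moments with the Carath\'eodory Taylor coefficients to the non-classical regime governed by \eqref{IndCond}, and the growth bound follows from basic complex analysis.
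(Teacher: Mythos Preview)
Your proof is correct and follows essentially the same route as the paper: identify the Taylor coefficients of $F$ at $0$ with $\mu(z^{-n})=\overline{\mu(z^n)}$ by observing that both sides are polynomials in $\alpha_0,\bar\alpha_0,\ldots,\alpha_{n-1},\bar\alpha_{n-1}$ that agree in the classical regime, and then deduce the subexponential growth from the analyticity of $F$ near the origin. The paper phrases the transfer from classical to non-classical slightly differently (continuity of $F^{(n)}(0)$ in the Verblunsky data together with \cite[Theorem~1.5.5]{OPUC1}), but the substance is identical to your polynomial-identity argument.
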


\begin{proof}
First consider the case $|\alpha_n|<1$ for all $n\geq0$ so that $\mu$ is given by integration against a probability measure on the unit circle.  By \cite[Theorem 1.5.5]{OPUC1} we know that
\[
\mu(z^{-n})=P_n(\alpha_0,\ldots,\alpha_{n-1},\bar{\alpha}_0,\ldots,\bar{\alpha}_{n-1})
\]
for some polynomial $P_n$ in $2n$ variables.  Furthermore, the function
\begin{equation}\label{carform}
1+2\sum_{n=1}^{\infty}\mu(z^{-n})z^n
\end{equation}
is the Carath\'eodory function for the measure $\mu$.  In the general case, it follows from the construction that the quantities $\{F^{(n)}(0)\}_{n=0}^{\infty}$ are each continuous functions of the Verblunsky coefficients and in fact
\[
\frac{F^{(n)}(0)}{n!}=2P_n(\alpha_0,\ldots,\alpha_{n-1},\bar{\alpha}_0,\ldots,\bar{\alpha}_{n-1}),\qquad n\in\dN.
\]
Therefore, the function \eqref{carform} is Maclaurin series for $F$.  Since $F$ does not have a singularity at $0$, the desired claim follows.
\end{proof}

The distribution $\mu$ defined in Proposition \ref{subex} can be extended by linearity to Maclaurin series with non-zero radius of convergence.  Following ideas from \cite{Ger40,JNT,Zayed}, we set $\mu(z^j\bar{z}^k)=\mu(z^{j-k})$ so the polynomials $\{\Phi_n\}_{n=0}^{\infty}$ are orthogonal with respect to $\mu$, i.e.
\[
\left\langle\bar{w}^k\Phi_n(w),\mu\right\rangle_w=0, \quad k=0,1,\dots, n-1,
\]
and 
\[
\left\langle\bar{w}^n\Phi_n(w),\mu\right\rangle_w\ne 0,
\]
Consequently, we also have that 
\[
\left\langle\bar{w}^k\Phi^*_n(w),\mu\right\rangle_w=0, \quad k=1,2,\dots, n.
\]
We should mention that the distribution $\mu$ need not be unique, but its action on Laurent polynomials and Maclaurin series is uniquely determined by the sequence $\{\Phi_n\}_{n=0}^{\infty}$, the symmetry relation (\ref{symmetry}), and the normalization $\langle1,\mu\rangle_w=1$.  Since we will only be applying $\mu$ to Laurent polynomials and Maclaurin series, we will refer to this properly normalized $\mu$ as \textit{the} distribution corresponding to the sequence $\{\alpha_n\}_{n=0}^{\infty}$.

The next step is to transform the distribution $\mu$ by analogy with the classical case to obtain a new distribution that can be applied to functions defined on the interval $[-2,2]$.  This transformed distribution $\gamma$ is defined via the formula
\begin{equation}\label{SzIm}
\left\langle g(x),\gamma\right\rangle_x=\left\langle g\left(w+\bar{w}\right),\mu\right\rangle_w,
\end{equation}
where $x=w+\bar{w}$ and $g$ is any test function which can, for instance, be thought of as an arbitrary polynomial.
We can now formulate a result adapting the Szeg\H{o} mapping to the case in question.

\begin{theorem}[Szeg\H{o} Mapping]
Let $\{\alpha_n\}_{n=0}^{\infty}$ be a sequence of real Verblunsky coefficients obeying \eqref{IndCond} and let 
$\mu$ be the corresponding distribution. Then the distribution $\gamma$ defined by \eqref{SzIm} is quasi-definite, meaning there exists a sequence of monic polynomials $\{P_n\}_{n=0}^{\infty}$ satisfying $\deg(P_n)=n$ and such that
\[
\left\langle x^kP_n(x),\gamma\right\rangle_x=0, \quad k=0,1,\dots, n-1
\]
and 
\[
\left\langle x^nP_n(x),\gamma\right\rangle_x\ne 0
\]
for $n=0,1,\dots$. In addition, the polynomials $P_n$ obey the formula
\begin{equation}\label{GerPol}
P_n\left(z+\frac{1}{z}\right)=(1-\alpha_{2n-1})^{-1}z^{-n}\left[\Phi_{2n}(z)+\Phi_{2n}^*(z)\right],
\end{equation} 
where we set $\alpha_{-1}=-1$.
\end{theorem}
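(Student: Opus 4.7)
The plan is to take (\ref{GerPol}) as a \emph{definition} of $P_n$ and verify three things in sequence: that its right-hand side is a well-defined monic polynomial of degree $n$ in $x$, that the resulting $P_n$ is orthogonal to $1, x, \ldots, x^{n-1}$ with respect to $\gamma$, and that $\langle x^n P_n, \gamma \rangle_x \neq 0$.

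First I would verify that the right-hand side of (\ref{GerPol}) is a polynomial in $x = z + z^{-1}$ of exact degree $n$. Since the Verblunsky coefficients are real, each $\Phi_n$ has real coefficients and $\Phi_n^*$ is simply the coefficient-reversal of $\Phi_n$. Hence $\Phi_{2n}(z) + \Phi_{2n}^*(z)$ is a palindromic polynomial of degree $2n$, which is exactly $z^n$ times a polynomial of degree $n$ in $z + z^{-1}$. Its leading coefficient equals $1 + \Phi_{2n}(0) = 1 - \alpha_{2n-1}$, using the Szeg\H{o} recurrence at $z = 0$ to compute $\Phi_{2n}(0) = -\alpha_{2n-1}$ (the convention $\alpha_{-1} = -1$ handles $n = 0$). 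Reality together with (\ref{IndCond}) forces $\alpha_{2n-1} \neq 1$, so dividing by $1 - \alpha_{2n-1}$ yields a well-defined monic polynomial $P_n$ of degree $n$.

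Next I would establish orthogonality. By (\ref{SzIm}) and (\ref{GerPol}), the condition $\langle x^k P_n, \gamma \rangle_x = 0$ for $0 \leq k \leq n-1$ reduces to showing
\[
\mu\bigl( w^{-n}(w + w^{-1})^k [\Phi_{2n}(w) + \Phi_{2n}^*(w)] \bigr) = 0.
\]
Expanding $w^{-n}(w + w^{-1})^k = \sum_{j=0}^{k} \binom{k}{j} w^{2j - n - k}$, the exponents $2j - n - k$ range over $\{-(n+k), -(n+k)+2, \ldots, k-n\}$, and for $k \leq n-1$ this is contained in $[-(2n-1), -1]$. Every summand is therefore of the form $\mu(w^{-m}\Phi_{2n})$ with $0 \leq m \leq 2n-1$ or $\mu(w^{-m}\Phi_{2n}^*)$ with $1 \leq m \leq 2n$, and the orthogonality relations recorded just above the theorem statement annihilate them all. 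For non-degeneracy I would rerun the calculation with $k = n$: the intermediate exponents $-2, -4, \ldots, -(2n-2)$ still vanish by orthogonality, leaving only the contributions from $j = 0$ and $j = n$.

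The crux, and the main obstacle I anticipate, is handling these two surviving boundary terms. Since the Verblunsky coefficients are real, the moments $\mu(w^k)$ are all real, so (\ref{symmetry}) gives $\mu(w^{-k}) = \mu(w^k)$; a direct manipulation using the coefficient-reversal identity for $\Phi_{2n}^*$ then yields $\mu(\Phi_{2n}^*) = \mu(w^{-2n}\Phi_{2n})$ and $\mu(w^{-2n}\Phi_{2n}^*) = \mu(\Phi_{2n}) = 0$. The surviving sum collapses to $2 \mu(w^{-2n}\Phi_{2n})$, which is nonzero by the stated non-vanishing $\langle \bar w^{2n} \Phi_{2n}, \mu \rangle_w \neq 0$. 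Once this bookkeeping is in place, both the orthogonality and the quasi-definiteness follow immediately, and no analytic input beyond what is already in the excerpt is required.
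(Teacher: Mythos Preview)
Your proposal is correct and follows essentially the same route as the paper: the paper's proof invokes \cite[Lemma 13.1.4]{OPUC2} for the well-definedness of $P_n$ and then says the remainder ``proceeds exactly as the proof of \cite[Theorem 13.1.5]{OPUC2} with integration replaced by pairing with a distribution,'' which is precisely the computation you spell out in detail (including the reality of the moments, which the paper later notes ``by induction''). There is no substantive difference in approach.
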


\begin{proof}
We have already observed that when the Verblunsky coefficients are real, the polynomial $\Phi_n$ has real coefficients. Therefore, \cite[Lemma 13.1.4]{OPUC2} still ensures that the formula \eqref{GerPol}  correctly defines monic polynomials $P_n(x)$ of degree $n$ for $n=0,1,\dots$. The rest of the proof proceeds exactly as the proof of \cite[Theorem 13.1.5]{OPUC2} with integration replaced by pairing with a distribution.
\end{proof}

It is well known that polynomials orthogonal with respect to a quasi-definite distribution satisfy three-term recurrence relations (see \cite[Theorem 4.1]{Chihara}). In other words, there exists two sequences $\{b_n\}_{n=1}^{\infty}$ and $\{c_n\}_{n=1}^{\infty}$ such that
\begin{equation}\label{3term}
xP_n(x)=P_{n+1}(x)+b_{n+1}P_n(x)+c_nP_{n-1}(x), \quad n=0,1,\dots
\end{equation}
with the convention $P_{-1}=0$.  If the distribution $\gamma$ is a positive measure, then one can further state that $c_n\geq0$ for all $n\in\dN$, but in general this is not the case.  The Geronimus relations \cite[Theorem 13.1.7]{OPUC2} provide a relationship (in the classical case) between the Verblunsky coefficients and the recursion coefficients $\{b_n\}_{n=1}^{\infty}$ and $\{c_n\}_{n=1}^{\infty}$.  Our next result extends this relation to the case considered in this paper.

\begin{theorem}[Geronimus relations]
For the recurrence coefficients from \eqref{3term} we have 
\begin{eqnarray}
\label{GR1} c_{n+1}&=&(1-\alpha_{2n-1})(1-\alpha_{2n}^2)(1+\alpha_{2n+1})\\
\label{GR2} b_{n+1}&=&(1-\alpha_{2n-1})\alpha_{2n}-(1+\alpha_{2n+1})\alpha_{2n-2}
\end{eqnarray}
for $n=0,1,\dots$, where we set $\alpha_{-1}=0$.
\end{theorem}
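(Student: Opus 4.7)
The approach is to imitate the classical proof of \cite[Theorem 13.1.7]{OPUC2}, observing that the argument is purely algebraic and never invokes $|\alpha_n|<1$. The only ingredients used are the Szeg\H{o} recursion \eqref{SzRec} (which holds by our definition of $\Phi_n$), the reality of the coefficients of $\Phi_n$ when $\alpha_n\in\dR$, and the identity \eqref{GerPol} just established. Under \eqref{IndCond} one has $|\alpha_n|\neq 1$ for every $n$, so the factors $1-\alpha_{2n-1}$ appearing in \eqref{GerPol} are nonzero and no algebraic step will become vacuous.

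The concrete plan is as follows. First, iterate \eqref{SzRec} twice (with $\overline{\alpha}_n=\alpha_n$) to obtain
\[
\Phi_{2n+2}(z)=z(z-\alpha_{2n})\Phi_{2n}(z)+\alpha_{2n}\alpha_{2n+1}z\Phi_{2n}(z)-(\alpha_{2n}z+\alpha_{2n+1})\Phi_{2n}^*(z),
\]
and the analogous expression for $\Phi_{2n+2}^*$. Summing these two, the coefficients of $\Phi_{2n}$ and $\Phi_{2n}^*$ factor as $z(1-\alpha_{2n+1})(z-\alpha_{2n})$ and $(1-\alpha_{2n+1})(1-\alpha_{2n}z)$, so we get the clean identity
\[
\Phi_{2n+2}(z)+\Phi_{2n+2}^*(z)=(1-\alpha_{2n+1})\bigl[z(z-\alpha_{2n})\Phi_{2n}(z)+(1-\alpha_{2n}z)\Phi_{2n}^*(z)\bigr].
\]
Multiplying by $z^{-n-1}$ and using \eqref{GerPol} this expresses $P_{n+1}(z+z^{-1})$ entirely in terms of $z^{-n}\Phi_{2n}(z)$ and $z^{-n}\Phi_{2n}^*(z)$.

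Next, I would compute $(z+z^{-1})\cdot z^{-n}[\Phi_{2n}(z)+\Phi_{2n}^*(z)]$ directly and reorganize it to separate out a multiple of $P_{n+1}(z+z^{-1})$ (via the identity above), a multiple of $P_n(z+z^{-1})=(1-\alpha_{2n-1})^{-1}z^{-n}[\Phi_{2n}+\Phi_{2n}^*]$, and a residual that will turn out to be a multiple of $P_{n-1}(z+z^{-1})$. To handle the last piece one uses \eqref{SzRec} in the backward direction to rewrite $\Phi_{2n}$ and $\Phi_{2n}^*$ in terms of $\Phi_{2n-2}$ and $\Phi_{2n-2}^*$; after simplification the residual term collapses to $(1-\alpha_{2n-1})(1-\alpha_{2n-1}^2)(\cdots)\,z^{-n+1}[\Phi_{2n-2}+\Phi_{2n-2}^*]$. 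Reading off the coefficients by comparison with the three-term recursion \eqref{3term} yields \eqref{GR1} and \eqref{GR2}.

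The main obstacle is just bookkeeping: the calculation carries four different Verblunsky indices $\alpha_{2n-2},\alpha_{2n-1},\alpha_{2n},\alpha_{2n+1}$ simultaneously and one must carefully collect monomials in $z$ and $z^{-1}$ after the double Szeg\H{o} iteration. Once the identities for $\Phi_{2n+2}+\Phi_{2n+2}^*$ and for its backward counterpart are written down cleanly, the matching with $x P_n = P_{n+1}+b_{n+1}P_n+c_nP_{n-1}$ is mechanical, and the resulting $c_{n+1}$ and $b_{n+1}$ coincide with \eqref{GR1}--\eqref{GR2}.
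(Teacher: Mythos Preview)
Your plan is sound and will yield the result, but it differs from the paper's argument for \eqref{GR1}. The paper handles \eqref{GR2} exactly as in \cite[Theorem 13.1.7]{OPUC2} (a coefficient comparison, essentially algebraic), but for \eqref{GR1} it multiplies the recursion \eqref{3term} by $P_{n-1}$ and pairs with the quasi-definite functional $\gamma$ to obtain
\[
c_n=\frac{\langle P_n^2,\gamma\rangle_x}{\langle P_{n-1}^2,\gamma\rangle_x},
\]
after which the computation of $\langle P_n^2,\gamma\rangle_x$ is carried out via \eqref{SzIm} and \eqref{GerPol} as in Simon's proof, with integration against a measure replaced throughout by pairing with the distribution. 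Your route instead stays entirely on the polynomial side: from the key identity $\Phi_{2n+2}+\Phi_{2n+2}^*=(1-\alpha_{2n+1})[\,z(z-\alpha_{2n})\Phi_{2n}+(1-\alpha_{2n}z)\Phi_{2n}^*\,]$ (which is correct; note your displayed intermediate formula for $\Phi_{2n+2}$ alone has a stray $-\alpha_{2n}z\Phi_{2n}$ term, but this does not affect the summed identity) and its shifted counterparts, you recover both $b_{n+1}$ and $c_n$ by matching $(z+z^{-1})P_n(z+z^{-1})$ against $P_{n+1}+b_{n+1}P_n+c_nP_{n-1}$ directly. The advantage of your approach is that it never touches the distribution $\gamma$, so it makes transparent that the Geronimus relations are a purely algebraic consequence of \eqref{SzRec} and \eqref{GerPol}, independent of any orthogonality structure. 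The paper's approach, on the other hand, keeps the bookkeeping minimal by exploiting orthogonality to isolate $c_n$ as a ratio of ``norms'', at the cost of invoking the quasi-definite functional just established in the preceding theorem.
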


\begin{proof}
The proof of \eqref{GR2} is exactly the same as the proof of the corresponding relation in \cite[Theorem 13.1.7]{OPUC2}. 
The proof of \eqref{GR1} is also similar to the proof of \cite[Theorem 13.1.7]{OPUC2} but with a few details changed. So, we begin by multiplying \eqref{3term} by $P_{n-1}$ and applying the distribution $\gamma$, which yields
\[
c_n=\frac{\left\langle xP_{n-1}(x)P_{n}(x),\gamma\right\rangle_x}{\left\langle P_{n-1}^2(x),\gamma\right\rangle_x}.
\]
This reduces to
\[
c_n=\frac{\left\langle P_{n}^2(x),\gamma\right\rangle_x}{\left\langle P_{n-1}^2(x),\gamma\right\rangle_x}
\]
and then we can proceed exactly as in the proof of \cite[Theorem 13.1.7]{OPUC2} but replacing integration by pairing with a distribution.
\end{proof}

As a result of the Geronimus relations, we see a striking difference between the classical and non-classical cases. Namely, the fact that $|\alpha_n|>1$ for some values of $n$ will cause $c_{m}<0$ for some values of $m$, which never happens when $\gamma$ is a positive measure.  However, the condition (\ref{IndCond}) implies that $c_m<0$ for only a finite number of values of $m\in\dN$.  If we define
\[
p_n:=\frac{P_n}{\sqrt{|c_n\cdots c_1|}}
\]
then these polynomials satisfy the three-term recurrence relation
\[
xp_n(x)=\sqrt{|c_{n+1}|}p_{n+1}(x)+b_{n+1}p_n(x)+\frac{c_n}{\sqrt{|c_n|}}p_{n-1}(x).
\]
This recurrence relation can be expressed in terms of the following tri-diagonal matrix:
\[
H=\begin{pmatrix}
  b_1 & \sqrt{|c_1|}&  &\\
  \frac{c_1}{\sqrt{|c_1|}}& b_2& \sqrt{|c_2|}&\\
     & \frac{c_2}{\sqrt{|c_2|}}&b_3&\ddots\\
     &&\ddots&\ddots&    
      \end{pmatrix},      
\]
where all entries away from the three main diagonals are zero.  Recall that the assumption \eqref{IndCond} only allows for finitely many coefficients $c_m$ to be negative.  Consequently, the matrix $H$, which is obviously not symmetric, is a finite rank perturbation of a symmetric Jacobi matrix. This type of matrix is a very particular case of the generalized Jacobi matrices introduced and studied in \cite{DD04,DD07}.  The generalized Jacobi matrices play the same role for indefinite Hamburger moment problems as classical Jacobi matrices play for Hamburger moment problems. In other words, the generalized Jacobi matrices naturally appear as an operator model for the step-by-step solution of the Hamburger moment problems in the class of generalized Nevanlinna functions and the latter can be thought of as rational perturbations of classical Nevanlinna functions (see \cite{DD04,DD07} for further details). 

The matrix $H$ also gives rise to a self-adjoint operator in a Pontryagin space. To see this, define the sequence $\{\Delta_n\}_{n=1}^{\infty}$ so that $\Delta_1=1$, each $\Delta_n=\pm1$, and
\[
\frac{c_n}{\sqrt{|c_n|}}=\Delta_n\Delta_{n+1}\sqrt{|c_n|}.
\]
Note that since $c_m<0$ for only a finite number of values of $m\in\dN$, $\Delta_n$ has the same sign for all sufficiently large $n$. Then define the diagonal matrix $G=\diag(\Delta_1,\Delta_2,\Delta_3, \dots)$ and consider the bilinear form on $\ell^2$ given by
\[
(x,y)_G=\langle Gx,y\rangle_{\ell^2}, \quad x,y\in \ell^2.
\]  
Notice that $GH$ is a symmetric matrix so that
\[
(Hx,y)_G=\langle GHx,y\rangle_{\ell^2}=\langle x,GHy\rangle_{\ell^2}=\langle Gx,Hy\rangle_{\ell^2}=(x,Hy)_G,
\]
so $H$ is self-adjoint as an operator on this space.

Following \cite{DD04,DD07} we can also introduce the $m$-function of $H$ via the formula
\[
m(z)=((H-z)^{-1}e,e)_G, \quad e=(1,0,0,\dots)^{\top}.
\]
The function $m$ is analytic on its domain, which is the resolvent set of the operator $H$.  It could have poles in the upper half-plane, but since $H$ is a bounded operator, it is certainly defined in some neighborhood of infinity. This can also be seen from the theory we have already developed and the following result.

\begin{theorem}
Let $\{\alpha_n\}_{n=0}^{\infty}$ be a sequence of real Verblunsky coefficients obeying \eqref{IndCond} and let 
$\mu$ and $\gamma$ be the corresponding distributions related by means of the Szeg\H{o} mapping. The function $F$ given by \eqref{Fdef} admits the following representation in some neighborhood of zero:
\[
F(z)=\left\langle\frac{w+z}{w-z},\mu\right\rangle_w
\]
and the corresponding $m$-function obeys the formula
\[
m(z)=\left\langle\frac{1}{x-z},\gamma\right\rangle_x
\]
for all $z$ in some neighborhood of infinity.  Moreover, the $m$-function and $F$ are related by
\begin{equation}\label{CN_r}
F(z)=(z-z^{-1})m(z+z^{-1}),\qquad z\in\dD\setminus\{\lambda_j\}_{j=1}^{k},
\end{equation}
where $\{\lambda_j\}_{j=1}^{k}$ is the set of zeroes of $B$.
\end{theorem}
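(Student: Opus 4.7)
The plan is to establish the three assertions in order and then extend the last one by analytic continuation.

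For the series representation of $F$, I would expand the kernel as a geometric series,
\[
\frac{w+z}{w-z}=1+2\sum_{n=1}^{\infty}z^{n}w^{-n},
\]
valid for $|z|<|w|$, and pair termwise with $\mu$ to obtain $1+2\sum_{n\ge 1}\mu(w^{-n})z^{n}$. The proof of Proposition \ref{subex} already records the identity $F^{(n)}(0)/n!=2\mu(w^{-n})$ via continuity of the polynomials $P_{n}$ in the Verblunsky coefficients, so this series agrees with the Maclaurin expansion of $F$ at $0$. Convergence on a disc around zero is guaranteed by the growth bound $\limsup|\mu(w^{n})|^{1/n}<\infty$ from the same proposition.

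For the representation of $m$, I would exploit the boundedness of $H$. Since $\{\alpha_{n}\}$ is bounded, the Geronimus relations force $\{b_{n}\}$ and $\{c_{n}\}$ to be bounded, and hence $H$ is a bounded operator on $\ell^{2}$. For $|z|>\|H\|$ the Neumann expansion gives $(H-zI)^{-1}=-\sum_{n\ge 0}z^{-n-1}H^{n}$, so $m(z)=-\sum_{n\ge 0}z^{-n-1}(H^{n}e,e)_{G}$. The main computational step is the moment identification $(H^{n}e,e)_{G}=\gamma(x^{n})$ for all $n\ge 0$. I would verify this by induction on $n$: the cases $n=0,1$ follow from the definitions, and for the inductive step I would use that the off-diagonal products satisfy $H_{k,k+1}H_{k+1,k}=c_{k}$ while $H_{kk}=b_{k}$, so closed walks of length $n$ on the tridiagonal graph based at $1$ (which generate $(H^{n}e,e)_{G}$) are weighted by exactly the same combinatorial data that the three-term recurrence \eqref{3term} uses to generate $\gamma(x^{n})$. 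Summing the resulting series recovers the Laurent expansion of $\langle 1/(x-z),\gamma\rangle_{x}$ at infinity.

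For \eqref{CN_r}, I would first establish the identity on a punctured neighbourhood of zero. Since each $\alpha_{n}\in\dR$, each $\Phi_{n}$ has real coefficients, so every $\mu(w^{n})$ is real; combined with \eqref{symmetry} this gives $\mu(w^{n})=\mu(w^{-n})$, which allows the symmetrisation
\[
2F(z)=\left\langle \frac{w+z}{w-z}+\frac{w^{-1}+z}{w^{-1}-z},\mu\right\rangle_{w}.
\]
A short algebraic manipulation combines the two fractions into
\[
\frac{2(z^{-1}-z)}{(z+z^{-1})-(w+w^{-1})},
\]
and then the Szeg\H{o} change of variables \eqref{SzIm} together with the series formula for $m$ identifies the right-hand side with $2(z-z^{-1})m(z+z^{-1})$; here the pairing is legitimate because for $|z|$ small $|z+z^{-1}|$ is large, making the Maclaurin expansion in $x$ admissible for $\gamma$.

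The main obstacle is extending \eqref{CN_r} from this small disc to all of $\dD\setminus\{\lambda_{j}\}_{j=1}^{k}$. By Theorem \ref{normalszego} and the discussion preceding it, $F$ is meromorphic on $\dD$ with poles only at the $\lambda_{j}$, while $(z-z^{-1})m(z+z^{-1})$ is meromorphic on $\dD\setminus\{0\}$ with possible singularities only at those $z\in\dD$ for which $z+z^{-1}$ lies in the spectrum of $H$ outside $[-2,2]$. Since the identity already holds on a neighbourhood of zero and both sides are meromorphic on $\dD$, analytic continuation along paths in $\dD\setminus\{\lambda_{j}\}$ forces the identity throughout; in particular any apparent pole of the right-hand side at a point of $\dD\setminus\{\lambda_{j}\}$ must be removable, which yields \eqref{CN_r} on the full set.
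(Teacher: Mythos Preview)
Your proposal is correct and follows essentially the same route as the paper: expand $F$ and $m$ as power/Neumann series, identify the coefficients with the moments of $\mu$ and $\gamma$, symmetrise using the reality of the moments to obtain \eqref{CN_r} near $0$, and then continue analytically. Two small differences are worth noting: for the moment identity $(H^{n}e,e)_{G}=\gamma(x^{n})$ the paper expands $x^{n}=\sum_{j}a_{j}p_{j}$ and reads off $a_{0}=\langle H^{n}e,e\rangle_{\ell^{2}}$ directly (using $\Delta_{1}=1$), which is a bit quicker than your walk argument; and for the continuation the paper does not assume the right-hand side is already meromorphic on $\dD$ but instead uses the known meromorphy of $F$ (from \cite{DS17}) to \emph{define} the extension of $m(z+z^{-1})$, which spares you from having to justify that claim via Pontryagin-space spectral theory.
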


\begin{proof}
Notice that if $|z|$ is sufficiently small, then
\[
\left\langle\frac{w+z}{w-z},\mu\right\rangle_w=1+2\sum_{n=1}^{\infty}\langle w^{-n},\mu\rangle_w z^n=F(z),
\]
which (by (\ref{carform})) proves the desired formula for $F$.  Similarly, if $|z|$ is sufficiently large, then
\[
((H-z)^{-1}e,e)_G=-\sum_{n=0}^{\infty}\frac{1}{z^{n+1}}\langle GH^ne,e\rangle_{\ell^2}=-\sum_{n=0}^{\infty}\frac{1}{z^{n+1}}\langle H^ne,e\rangle_{\ell^2}
\]
(where we used the fact that $\Delta_1=1$) while
\[
\left\langle\frac{1}{x-z},\gamma\right\rangle_x=-\sum_{n=0}^{\infty}\frac{1}{z^{n+1}}\langle x^n,\gamma\rangle_x.
\]
Notice that if we write
\[
x^n=\sum_{j=0}^na_jp_j(x),
\]
then $a_0=\langle H^ne,e\rangle$.  The fact that $\langle p_j(x),\gamma\rangle_x=0$ for $j>0$ implies $a_0=\langle x^n,\gamma\rangle_x$, which proves the desired formula for $m$.

It remains to prove formula \eqref{CN_r}.  By induction, we see that the moments of $\mu$ are real so we have
\[
F(z)=\left\langle\frac{w+z}{w-z},\mu\right\rangle_w=\left\langle\frac{\bar{w}+z}{\bar{w}-z},\mu\right\rangle_w
\] 
for all $z$ in some neighborhood of zero.  We can then mimic the proof of \cite[Theorem 13.1.2]{OPUC2} to reach the desired conclusion in some neighborhood of zero.  We know from \cite{DS17} that $F$ is holomorphic in $\dD$ away from a finite set of isolated singularities, so the relation (\ref{CN_r}) provides an analytic continuation of $m(z+z^{-1})$ to the domain of $F$ and shows that (\ref{CN_r}) holds there.  
Finally, \cite[Lemma 3.4]{DS17} implies that the poles of $F$ in $\dD$ coincide with the zeroes of $B$.
\end{proof}

Now, based on the presented results, we can complement the theory of the generalized Jacobi matrices with  the following extension of \cite[Theorem 13.3.2]{OPUC2}.

\begin{theorem}\label{newoprl}
If $\log\Rl F\in L^1(\dT)$ and $\{\alpha_n\}_{n=0}^{\infty}$ is a sequence of real Verblunsky coefficients obeying \eqref{IndCond}, then the polynomials $\{P_n\}_{n=0}^{\infty}$ given by \eqref{GerPol} obey
\begin{equation}\label{SzegoAsymp}
\lim_{n\to\infty}z^nP_n\left(z+\frac{1}{z}\right)=\frac{B(z)D(0)}{B(0)D(z)},\qquad\qquad |z|<1,
\end{equation}
where the convergence is uniform on compact subsets.
\end{theorem}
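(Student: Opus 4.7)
The plan is to combine the two forms of the Szeg\H{o} recurrence to eliminate the prefactor $(1-\alpha_{2n-1})^{-1}$ in \eqref{GerPol}, then apply Theorem \ref{normalszego} to identify the limit of the ``reversed'' piece $\Phi_{2n-1}^*$, and finally to prove that the ``forward'' piece $\Phi_{2n-1}$ vanishes in the limit via a Christoffel--Darboux-type identity that survives in the indefinite setting.

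First I would note that, for real $\alpha_n$, adding the two identities in \eqref{SzRec} gives
\[
\Phi_{n+1}(z) + \Phi_{n+1}^*(z) = (1-\alpha_n)\bigl[z\Phi_n(z) + \Phi_n^*(z)\bigr].
\]
Substituting this (with $n$ replaced by $2n-1$) into \eqref{GerPol} cancels the prefactor and yields the clean identity $z^n P_n(z+1/z) = z\Phi_{2n-1}(z) + \Phi_{2n-1}^*(z)$. Since $\Phi_n^*(0)=1$ we have $\varphi_n^*(0) = 1/\sqrt{|\omega_{n-1}|}$, and evaluating Theorem \ref{normalszego} at $z=0$ gives $\sqrt{|\omega_{n-1}|}\to D(0)/B(0)$; combined with Theorem \ref{normalszego} itself this yields
\[
\Phi_{2n-1}^*(z) = \sqrt{|\omega_{2n-2}|}\,\varphi_{2n-1}^*(z) \longrightarrow \frac{D(0)}{B(0)}\cdot\frac{B(z)}{D(z)}
\]
uniformly on compact subsets of $\dD$, which is precisely the right-hand side of \eqref{SzegoAsymp}.

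The main obstacle is to show that $\varphi_n(z)\to 0$ uniformly on compact subsets of $\dD$, so that $z\Phi_{2n-1}(z)=\sqrt{|\omega_{2n-2}|}\,z\varphi_{2n-1}(z)\to 0$ as well. The classical argument uses Bessel's inequality against the (positive) orthogonality measure, which is unavailable here. Instead I would derive, directly from the Szeg\H{o} recurrence, the Christoffel--Darboux-type identity
\[
|\varphi_{n+1}^*(z)|^2 - |\varphi_{n+1}(z)|^2 = |\varphi_n^*(z)|^2 - |z|^2|\varphi_n(z)|^2, \qquad n\ge N,
\]
valid in this range because $1-|\alpha_n|^2=\rho_n^2>0$. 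Writing $A_n(z):=|\varphi_n^*(z)|^2 - |z|^2|\varphi_n(z)|^2$, a short manipulation gives
\[
A_{n+1}(z)-A_n(z) = (1-|z|^2)\,|\varphi_{n+1}(z)|^2 \ge 0, \qquad z\in\dD,\ n\ge N-1,
\]
so $A_n$ is non-decreasing from $n=N-1$ onward. By Theorem \ref{normalszego}, $|\varphi_n^*|^2$ converges, hence $A_n\le |\varphi_n^*|^2$ is bounded above, so $A_n$ converges; telescoping then gives $\sum_{n}(1-|z|^2)|\varphi_n(z)|^2<\infty$, and therefore $\varphi_n(z)\to 0$ pointwise on $\dD$. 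The same identity $|z|^2|\varphi_n|^2 = |\varphi_n^*|^2-A_n$ provides locally uniform bounds on $\dD\setminus\{0\}$; the maximum modulus principle extends these bounds across $0$, so $\{\varphi_n\}$ is a normal family on $\dD$, and Vitali's theorem upgrades pointwise convergence to uniform convergence on compact subsets of $\dD$.

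Assembling the pieces, $z^n P_n(z+1/z) = z\Phi_{2n-1}(z) + \Phi_{2n-1}^*(z)\to B(z)D(0)/(B(0)D(z))$ uniformly on compact subsets of $\dD$, as claimed.
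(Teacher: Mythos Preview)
Your proof is correct and, in a couple of respects, cleaner than the paper's own argument.

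The paper proceeds differently on two points. First, it keeps the formula \eqref{GerPol} as is and disposes of the prefactor $(1-\alpha_{2n-1})^{-1}$ by invoking the fact (from \cite[Theorem~3.5]{DS17}) that $\log\Rl F\in L^1(\dT)$ forces $\alpha_n\to 0$; your algebraic reduction to $z^nP_n(z+1/z)=z\Phi_{2n-1}(z)+\Phi_{2n-1}^*(z)$ sidesteps this entirely. Second, to kill the ``forward'' piece the paper shows $\Phi_n/\Phi_n^*\to 0$ on $\dD\setminus\{\lambda_j\}$ by combining the ratio asymptotics \eqref{StarRatio} with the argument on \cite[p.~92]{OPUC1}, then uses Montel's theorem to fill in the punctures. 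Your Christoffel--Darboux telescoping
\[
A_{n+1}(z)-A_n(z)=(1-|z|^2)\,|\varphi_{n+1}(z)|^2,\qquad A_n:=|\varphi_n^*|^2-|z|^2|\varphi_n|^2,
\]
gives $\varphi_n\to 0$ on all of $\dD$ directly, without routing through the ratio corollary or citing \cite{OPUC1}. (A tiny indexing slip: the displayed recursion uses the CD identity at step $n$, which needs $|\alpha_n|<1$, so it is valid for $n\ge N$ rather than $n\ge N-1$; this does not affect the conclusion.) The paper's approach is shorter because it outsources the work; yours is more self-contained and avoids the detour through the punctured disk.
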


\begin{proof}
We begin by noticing that \cite[Theorem 3.5]{DS17} tells us that if $\log\Rl F\in L^1(\dT)$, then $\alpha_n\rightarrow0$ as $n\rightarrow\infty$.  Next, we observe that \eqref{StarRatio} and the argument from \cite[page 92]{OPUC1} together imply
\begin{equation}\label{nrat1}
\lim_{n\to\infty}\frac{\Phi_{n}(z)}{\Phi_{n}^*(z)}=0
\end{equation}
uniformly on compact subsets of $\dD\setminus\{\lambda_j\}_{j=1}^{k}$.  Combining this fact with Theorem \ref{normalszego} and \eqref{GerPol}, we deduce
\begin{equation}\label{plim}
\lim_{n\rightarrow\infty}z^nP_n\left(z+\frac{1}{z}\right)=\lim_{n\rightarrow\infty}\Phi_{2n}^*\left(1+\frac{\Phi_{2n}}{\Phi_{2n}^*}\right)=\frac{B(z)D(0)}{B(0)D(z)}
\end{equation}
uniformly on compact subsets of $\dD\setminus\{\lambda_j\}_{j=1}^{k}$.  Finally, notice that Theorem \ref{normalszego}, equation \eqref{nrat1}, and Montel's Theorem imply $\{z^nP_n(z+1/z)\}_{n\in\dN}$ is a normal family on $\dD$ and hence \eqref{plim} holds uniformly on compact subsets of $\dD$.
\end{proof}

\begin{remark}
Unlike the classical case, we see that for a finite number of values $z\in\dD$, the limit in \eqref{SzegoAsymp} is zero.  Indeed, if $z_0\in\dC\setminus\dR$ is an eigenvalue of $H$ (and hence a singularity of the $m$-function; see \cite{DD04,DD07}), then $\{p_n(z_0)\}_{n=0}^{\infty}$ is in $\ell^2(\dN_0)$, which implies $\{P_n(z_0)\}_{n=0}^{\infty}$ is in $\ell^2(\dN_0)$ (we used \cite[Theorem 13.3.1]{OPUC2} here).  Then, if $z\in\dD$ is such that $z+1/z=z_0$, then the limit in \eqref{SzegoAsymp} is zero.  From \eqref{CN_r}, we see that this value of $z$ is a pole of $F$ in $\dD$.
\end{remark}

Finally, it is worth noting that many asymptotic results from the classical setting still hold true in the non-classical setting.  
For example, the next consequence of Szeg\H{o}'s theorem and the Geronimus relations keeps the same form as in the classical case.

\begin{theorem}
If $\log\Rl F\in L^1(\dT)$ then for the corresponding matrix $H$ we have 
\[
\lim_{k\to\infty}b_k=0, \qquad \lim_{k\to\infty}c_k=1.
\]
\end{theorem}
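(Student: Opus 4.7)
The plan is to combine two ingredients that have already been established in the paper: the fact, recalled at the start of the proof of Theorem \ref{newoprl}, that $\log\Rl F\in L^1(\dT)$ forces $\alpha_n\to 0$ as $n\to\infty$ (via \cite[Theorem 3.5]{DS17}), and the Geronimus relations \eqref{GR1}--\eqref{GR2} which express $b_{n+1}$ and $c_{n+1}$ as explicit polynomials in $\alpha_{2n-2},\alpha_{2n-1},\alpha_{2n},\alpha_{2n+1}$.

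First I would invoke Szeg\H{o}'s theorem from \cite{DS17} to conclude that $\alpha_n\to 0$. Since all four of the indices $2n-2$, $2n-1$, $2n$, $2n+1$ tend to infinity as $n\to\infty$, each of the Verblunsky coefficients appearing on the right-hand sides of \eqref{GR1} and \eqref{GR2} tends to $0$. Substituting into \eqref{GR1} gives
\[
\lim_{n\to\infty}c_{n+1}=\lim_{n\to\infty}(1-\alpha_{2n-1})(1-\alpha_{2n}^2)(1+\alpha_{2n+1})=1,
\]
and substituting into \eqref{GR2} gives
\[
\lim_{n\to\infty}b_{n+1}=\lim_{n\to\infty}\bigl[(1-\alpha_{2n-1})\alpha_{2n}-(1+\alpha_{2n+1})\alpha_{2n-2}\bigr]=0,
\]
which are the desired limits (after a harmless reindexing from $n+1$ to $k$).

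There is no real obstacle here: once Szeg\H{o}'s theorem is in hand, the Geronimus relations reduce the statement to continuity of polynomials at the origin. The only point worth mentioning is that \eqref{IndCond} guarantees $\{\alpha_n\}$ is eventually in the open unit disk, so taking the limit term by term in \eqref{GR1}--\eqref{GR2} is legitimate for all sufficiently large $n$.
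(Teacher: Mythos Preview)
Your proposal is correct and matches the paper's own approach: the paper does not spell out a proof but explicitly labels the theorem a ``consequence of Szeg\H{o}'s theorem and the Geronimus relations,'' which is precisely the argument you give. The only minor remark is that the last sentence about \eqref{IndCond} is unnecessary---the limit computation in \eqref{GR1}--\eqref{GR2} is pure continuity of polynomials and requires no bound on $|\alpha_n|$ beyond $\alpha_n\to 0$.
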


\section{Examples}

In this section we provide detailed calculations of two explicit examples, which demonstrate phenomena relevant to our theorems.

\subsection{Single Large Verblunsky Coefficient}

In \cite[Theorem 4.1]{DS17}, it was shown that there are collections of non-classical Verblunsky coefficients that generate polynomial sequences that are not orthogonal with respect to any non-trivial signed measure on the unit circle.  The simplest example of such a sequence is $\{2,0,0,0,\ldots\}$, for in this case
\begin{equation}\label{Ex1}
\Phi_n(z)=z^{n-1}(z-2)
\end{equation}
and hence if there were such a signed measure of orthogonality $\mu$, then
\[
\int z^nd\mu(z)=2\int z^{n-1}d\mu=\cdots=2^n\int 1 d\mu,
\]
which is impossible unless $\mu$ is the zero measure. However, after a few attempts and by taking into account that the sequence of monomials $1, z, z^2, \dots $ is orthogonal with respect to the Lebesgue measure on $\dT$, one may guess that the polynomials $\Phi_n$ given by \eqref{Ex1} are orthogonal with respect to a bilinear form  
\begin{equation}\label{GT}
(f,g)=\int_{0}^{2\pi}\frac{f(e^{i\theta})\overline{g(e^{i\theta})}}{|e^{i\theta}-2|^2}\, \frac{d\theta}{2\pi}+
Mf(2)\overline{g(1/2)}+Mf(1/2)\overline{g(2)},
\end{equation}
where $M$ is a real number to be determined later.  Indeed, it is clear that $(\Phi_0,\Phi_0)=1/3+2M$ and
\[
(\Phi_n,\Phi_m)=\delta_{n,m}
\]
as long as $n\ne m$ and $n,m\ge 1$.  It remains to see what happens when one of the indices is 0. To this end, consider $(\Phi_n,\Phi_0)$ for some natural number $n$:
\[
(\Phi_n,\Phi_0)=(\Phi_n,1)=-\frac{1}{2}\frac{1}{2^{n-1}}+M\frac{1}{2^{n-1}}\left(-\frac{3}{2}\right).
\]
Therefore, setting $M=-1/3$ leads to the form that realizes the orthogonality for the sequence $\{\Phi_n\}_{n=0}^{\infty}$.  Using terminology from \cite{GHM10}, the bilinear form \eqref{GT} is a Geronimus transformation of the Lebesgue measure on $\dT$. It is worth mentioning here that due to \cite[Proposition 3]{GHM10} the corresponding pseudo-Carth\'eodory function is a rational perturbation of the Carath\'eodory function that corresponds to the Lebesgue measure on $\dT$. A similar situation takes place in the case of polynomials orthogonal on the real line \cite{DD10}.  

Now, let us consider the OPRL corresponding to $\Phi_n$ given by \eqref{Ex1}.  In this case, it is easy to check that
\[
z^nP_n\left(z+\frac{1}{z}\right)=z^{2n}-2z^{2n-1}-2z+1.
\]
If $|z|<1$ and we take $\nri$, then this converges to $1-2z$.  Let us check that this is consistent with Theorem \ref{newoprl}.

Indeed, in this case we can use \eqref{Khrushev} to verify that
\[
\Rl F(z)=\frac{-3}{|1-2z|^2}
\]
Therefore,
\begin{align*}
D(z)&=\exp\left(\frac{1}{4\pi}\int_0^{2\pi}\frac{\eitheta+z}{\eitheta-z}\log\left(\frac{3}{|1-2\eitheta|^2}\right)d\theta\right)\\
&=\exp\left(\frac{1}{4\pi}\int_0^{4\pi}\frac{\eitheta+z}{\eitheta-z}\log\left(\frac{3/4}{|\eitheta-1/2|^2}\right)d\theta\right)\\
&=\frac{\sqrt{3}}{2-z},
\end{align*}
where we used the table \cite[page 85]{OPUC1}.  Also, we notice that
\[
B(z)=\frac{1/2-z}{1-z/2}=\frac{1-2z}{2-z}
\]
so that $B(0)=1/2$.  Then
\[
\frac{B(z)D(0)}{B(0)D(z)}=\frac{(1-2z)\sqrt{3}}{\sqrt{3}}=1-2z
\]
exactly as predicted by Theorem \ref{newoprl}.

\subsection{Single Nontrivial Moment}

Consider the Verblunsky coefficient sequence given by
\[
\left\{2\sqrt{2},\frac{-1}{2\sqrt{2}},\frac{-1}{7},\frac{-1}{12\sqrt{2}},\frac{-1}{41}\ldots,\frac{-2}{(\sqrt{2}+1)^{n+1}-(\sqrt{2}-1)^{n+1}},\ldots\right\}
\]
This corresponds to the sequence of Verblunsky coefficients for the measure
\begin{align}\label{muex2}
d\mu(\theta)=\left(1-\frac{\cos(\theta)}{\sqrt{2}}\right)\frac{d\theta}{2\pi},
\end{align}
but with a $2\sqrt{2}$ appended to the beginning of the sequence.  If we let $\tilde{\Phi}_n$ and $\tilde{\Psi}_n$ respectively denote the monic degree $n$ orthogonal and second kind polynomials for the measure (\ref{muex2}), then the formulas in \cite[Section 3.4]{OPUC1} tell us that
\begin{align}\label{pnex2}
z^nP_n\left(z+\frac{1}{z}\right)=\frac{(1-2\sqrt{2}z)(\tilde{\Phi}_{2n-1}-\tilde{\Psi}_{2n-1}+\tilde{\Phi}_{2n-1}^*+\tilde{\Psi}_{2n-1}^*)}{2\left(1+\frac{2}{(\sqrt{2}+1)^{2n}-(\sqrt{2}-1)^{2n}}\right)}+\\
\nonumber +\frac{(z-2\sqrt{2})(\tilde{\Phi}_{2n-1}+\tilde{\Psi}_{2n-1}+\tilde{\Phi}_{2n-1}^*-\tilde{\Psi}_{2n-1}^*)}{2\left(1+\frac{2}{(\sqrt{2}+1)^{2n}-(\sqrt{2}-1)^{2n}}\right)}.
\end{align}
It is clear that the denominators of these fractions converge to $2$ as $\nri$.  To estimate the numerator, we need to know more about $\tilde{\Phi}_{2n-1}$ and $\tilde{\Psi}_{2n-1}$.  We have detailed information about $\tilde{\Phi}_{n}$ from the table\footnote{\label{typo}In Table (v) from \cite[page 86]{OPUC1}, we believe there is a typo in the formula for $d_n^-$.  The correct formula appears in \cite[Equation 1.6.18]{OPUC1}.} on \cite[page 86]{OPUC1}.  Formulas for $\tilde{\Psi}_n$ are more difficult to obtain.  However, we can use \cite[Propositiion 3.2.8]{OPUC1}, which tells us that\footnote{We believe  there is a typo in \cite[Equation 3.2.51]{OPUC1}.  The quantity $\overline{\Phi_n(z^{-1})}$ should be $\overline{\Phi_n(\bar{z}^{-1})}$.}
\begin{align*}
\tilde{\Psi}_n(z)&=\int_0^{2\pi}\frac{\eitheta+z}{\eitheta-z}[\tilde{\Phi}_n(\eitheta)-\tilde{\Phi}_n(z)]\left(1-\frac{\cos(\theta)}{\sqrt{2}}\right)\frac{d\theta}{2\pi}\\
\tilde{\Psi}_n^*(z)&=z^n\int_0^{2\pi}\frac{\eitheta+z}{\eitheta-z}[\overline{\tilde{\Phi}_n(\bar{z}^{-1})}-\overline{\tilde{\Phi}_n(\eitheta)}]\left(1-\frac{\cos(\theta)}{\sqrt{2}}\right)\frac{d\theta}{2\pi}
\end{align*}
Since the measure (\ref{muex2}) has all moments zero except the $0^{th}$ moment and the first moment, these integrals can be evaluated by hand.  Indeed, if
\[
\tilde{\Phi}_n(z)=\sum_{j=0}^nt_jz^j,
\]
then we calculate
\begin{align*}
\tilde{\Psi}_n(z)&=\tilde{\Phi}_n(z)+\frac{\tilde{\Phi}_n(z)-t_0}{z\sqrt{2}}-t_0+\frac{1}{2\sqrt{2}}t_1\\
\tilde{\Psi}_n^*(z)&=\tilde{\Phi}_n^*(z)\left(1-\frac{z}{\sqrt{2}}\right)-z^n\left(t_0\left(1-\frac{z}{\sqrt{2}}\right)-\frac{t_1}{2\sqrt{2}}\right)
\end{align*}
With this information, we can evaluate the numerators in (\ref{pnex2}).  The formulas in \cite[page 86]{OPUC1}\footref{typo} show that
$t_j=d_j^-/d_n$, where
\[
d_j^-=\frac{\mu_+^{j+1}-\mu_-^{j+1}}{\mu_+-\mu_-}, \quad j=0,1,\dots, n
\]
and $\mu_{\pm}=\sqrt{2}\pm1$ since we set $a=1/\sqrt{2}$.
In particular, the latter formula yields that $t_0$ and $t_1$ tend to $0$ as $n\to\infty$.  Furthermore, since the measure (\ref{muex2}) is a Szeg\H{o} measure, we know that $\tilde{\Phi}_n(z)\rightarrow0$ as $n\to\infty$ for every $z\in\dD$ and so the above calculations show that the same is true for $\tilde{\Psi}_n(z)$.  Finally, we again use the fact that (\ref{muex2}) is a Szeg\H{o} measure and the formula for the Szeg\H{o} function for this measure (see \cite[page 86]{OPUC1}) to write (when $|z|<1$)
\[
\lim_{n\to\infty}\tilde{\Phi}_n^*(z)=\frac{1}{1-(\sqrt{2}-1)z},\qquad\qquad\lim_{n\to\infty}\tilde{\Psi}_n^*(z)=\frac{1-z/\sqrt{2}}{1-(\sqrt{2}-1)z}.
\]
Putting all of this together, we find that if $|z|<1$ and we send $\nri$, then
\begin{align*}
\lim_{\nri}z^nP_n\left(z+\frac{1}{z}\right)&=\frac{(1-2\sqrt{2}z)(2-z/\sqrt{2})+(z-2\sqrt{2})(z/\sqrt{2})}{2(1-(\sqrt{2}-1)z)}\\
&=\left(\frac{6+5\sqrt{2}}{-4}\right)\frac{(z-2\sqrt{2})\left(z-\frac{2\sqrt{2}-1}{7}\right)}{z-(\sqrt{2}+1)}
\end{align*}
One can check using the table on \cite[page 86]{OPUC1} with $a=1/\sqrt{2}$ that
\[
\Rl F(z)=\frac{7(1-|z-2\sqrt{2}|^2)}{|(z-2\sqrt{2})(1-z(2\sqrt{2}+1))|^2}
\]
Therefore,
\begin{align*}
D(z)&=\exp\left(\frac{1}{4\pi}\int_0^{2\pi}\frac{\eitheta+z}{\eitheta-z}\log\left(\frac{-7(1-|\eitheta-2\sqrt{2}|^2)}{|(\eitheta-2\sqrt{2})(1-\eitheta(2\sqrt{2}+1))|^2}\right)d\theta\right)\\
&=\sqrt{7}\exp\left(\frac{1}{4\pi}\int_0^{2\pi}\frac{\eitheta+z}{\eitheta-z}\log\left(|\eitheta-2\sqrt{2}|^2-1\right)d\theta\right)\\
&\qquad\qquad\times\frac{1}{\sqrt{8}}\exp\left(\frac{1}{4\pi}\int_0^{2\pi}\frac{\eitheta+z}{\eitheta-z}\log\left(\frac{1}{|\eitheta-\frac{1}{2\sqrt{2}}|^2}\right)d\theta\right)\\
&\qquad\qquad\qquad\times\frac{1}{2\sqrt{2}+1}\exp\left(\frac{1}{4\pi}\int_0^{2\pi}\log\left(\frac{1}{|\eitheta-1/(2\sqrt{2}+1))|^2}\right)d\theta\right)\\
&=\frac{\sqrt{28}(1-z(\sqrt{2}-1))}{\sqrt{2-\sqrt{2}}(2\sqrt{2}-z)(2\sqrt{2}+1-z)}
\end{align*}
In particular
\[
D(0)=\frac{\sqrt{7}}{(2\sqrt{2}+1)\sqrt{4-2\sqrt{2}}}
\]
Finally, notice that the formulas in \cite[Section 3.4]{OPUC1} imply
\begin{align*}
\Phi_n^*(z)&=\frac{1}{2}\left((1-2\sqrt{2}z)(\tilde{\Phi}_{n-1}^*(z)+\tilde{\Psi}_{n-1}^*(z))+(z-2\sqrt{2})(\tilde{\Phi}_{n-1}^*(z)-\tilde{\Psi}_{n-1}^*(z))\right)\\
&\rightarrow\frac{1}{2}\left(\frac{1-z2\sqrt{2}+1-z/\sqrt{2}}{1-(\sqrt{2}-1)z}+\frac{z-2\sqrt{2}-1+z/\sqrt{2}}{1-(\sqrt{2}-1)z}\right)
\end{align*}
as $\nri$, where we used \cite[Corollary 3.2.5]{OPUC1}.  Simplifying this expression, we see that the only zero of $\Phi_n^*(z)$ in $\overline{\dD}$ for large $n$ approaches $(2\sqrt{2}+1)^{-1}$ as $\nri$.  Therefore,
\[
\lim_{\nri}B_n(z)=B(z)=\frac{\frac{1}{2\sqrt{2}+1}-z}{1-\frac{z}{2\sqrt{2}+1}}
\]
We then find
\begin{align*}
\frac{B(z)D(0)}{B(0)D(z)}&=\frac{\sqrt{7}(2\sqrt{2}+1)\left(\frac{1}{2\sqrt{2}+1}-z\right)\sqrt{2-\sqrt{2}}(2\sqrt{2}-z)(2\sqrt{2}+1-z)}{(2\sqrt{2}+1)\sqrt{4-2\sqrt{2}}\sqrt{28}(1-z(\sqrt{2}-1))\left(1-\frac{z}{2\sqrt{2}+1}\right)}\\
&=-\frac{(6+5\sqrt{2})\left(z-\frac{2\sqrt{2}-1}{7}\right)(z-2\sqrt{2})}{4(z-(\sqrt{2}+1))}
\end{align*}
exactly as predicted by Theorem \ref{newoprl}.


\begin{thebibliography}{100}

\bibitem{ADL07} D. Alpay, A. Dijksma, H. Langer, \textit{The transformation of Issai Schur and related topics in an indefinite setting}, System theory, the Schur algorithm and multidimensional analysis, Oper. Theory Adv. Appl., 176 (2007), 1--98. 

\bibitem{Chihara} T. S. Chihara, {\em An Introduction to Orthogonal Polynomials}, Mathematics and its Applications, Vol. 13. Gordon and Breach, New York - London - Paris, 1978.

\bibitem{DGK86}
P. Delsarte, Y. Genin, Y. Kamp, \textit{Pseudo-Carath\'eodory functions and Hermitian Toeplitz matrices}, Philips J. Res. 41 (1986), no. 1, 1--54.

\bibitem{DS17} M. Derevyagin and B. Simanek, \textit{On Szeg\H{o}'s theorem for a nonclassical case}, J. Funct. Anal. 272 (2017), no. 6, 2487--2503.

\bibitem{DD04}
 M. Derevyagin, V. Derkach, \textit{Spectral problems for generalized Jacobi matrices}, Linear Algebra Appl., Vol. 382 (2004), 1--24.

\bibitem{DD07}
M. Derevyagin, V. Derkach, \textit{On convergence of Pad\'e approximants for generalized Nevanlinna functions}, Trans. Moscow Math. Soc. 68 (2007) 133--182.

\bibitem{DD10}
{M. Derevyagin, V.Derkach}, \textit{Darboux transformations of Jacobi matrices and Pad\'e approximation},
Linear Algebra Appl. 435, no. 12 (2011), 3056--3084.

\bibitem{G81}
J. B. Garnett, \textit{Bounded analytic functions}. Pure and Applied Mathematics, 96. Academic Press, Inc. [Harcourt Brace Jovanovich, Publishers], New York-London, 1981.

\bibitem{GHM10}
L. Garza, J. Hern\'andez, F. Marcell\'an, \textit{Orthogonal polynomials and measures on the unit circle. The Geronimus transformations}, J. Comput. Appl. Math. 233 (2010), no. 5, 1220--1231

\bibitem{Ger40}
J. Geronimus,  \textit{Generalized orthogonal polynomials and the Christoffel-Darboux formula}, C. R. (Doklady) Acad. Sci. URSS (N.S.) 26, (1940) 847--849.

\bibitem{JNT}
W.B. Jones, O. Nj\aa stad, W. J. Thron, \textit{Moment theory, orthogonal polynomials, quadrature, and continued fractions associated with the unit circle}, Bull. London Math. Soc. 21 (1989), no. 2, 113--152.

\bibitem{Kh08}
S. Khrushchev, \textit{Orthogonal polynomials and continued fractions. From Euler's point of view.} Encyclopedia of Mathematics and its Applications, 122. Cambridge University Press, Cambridge, 2008.

\bibitem{K98}  P. Koosis, \textit{Introduction to $H_p$ spaces}. Second edition. Cambridge Tracts in Mathematics, 115. Cambridge University Press, Cambridge, 1998.

\bibitem{Rudin} W. Rudin, {\em Real and Complex Analysis}, Third edition. McGraw-Hill Book Co., New York, 1987.

\bibitem{OPUC1} B. Simon, {\em Orthogonal Polynomials on the Unit Circle, Part One: Classical Theory}, American Mathematical Society, Providence, RI, 2005.

\bibitem{OPUC2} B. Simon, {\em Orthogonal Polynomials on the Unit Circle, Part Two: Spectral Theory}, American Mathematical Society, Providence, RI, 2005.

\bibitem{Rice} B. Simon, {\em Szeg\H{o}'s Theorem and its Descendants: Spectral Theory for $L^2$ perturbations of Orthogonal Polynomials}, Princeton University Press, Princeton, NJ, 2010.

\bibitem{Zayed} A. Zayed, {\em Generalized functions and orthogonal polynomials on the unit circle}, Proc. Amer. Math. Soc. 88 (1983), no. 3, 407--415.

\end{thebibliography}
\end{document}